\newtheorem{theorem}{Theorem}[section]
\newtheorem{corollary}[theorem]{Corollary}
\newtheorem{lemma}[theorem]{Lemma}
\newtheorem{proposition}[theorem]{Proposition}
\newtheorem{remark}[theorem]{Remark}
\newtheorem{conjecture}[theorem]{Conjecture}
\newtheorem{example}[theorem]{Example}
\newenvironment{proof}{\begin{trivlist}\item[]{\it Proof.}}
{\hfill$\square$\end{trivlist}}
\author{M. Domokos  ${}^a$ \thanks{Partially supported by OTKA K101515.}\;  \ and M. Zubor ${}^b$ 
\\ 
\\ 
{\small ${}^a$ R\'enyi Institute of Mathematics, Hungarian Academy of 
Sciences,} 
\\ {\small 1053 Budapest, Re\'altanoda utca 13-15., Hungary } 
\\{\small E-mail: domokos.matyas@renyi.mta.hu } 
\\ 
\\
{\small ${}^b$ Department of Algebra, Budapest University of Technology and Economics }
\\ {\small 1521 Budapest, P. O. Box 91, Hungary}
 \\ {\small E-mail: zuborm@math.bme.hu  } 
}
\title{Commutative subalgebras of the Grassmann algebra}
\date{}
\begin{document}

\maketitle

\begin{abstract}
The maximal dimension of a  commutative subalgebra of the Grassmann algebra is determined.  It is shown that for any commutative subalgebra there exists 
an equidimensional commutative subalgebra   spanned by monomials. 
It follows that the maximal dimension of a commutative subalgebra can be expressed in terms of the maximal size of an intersecting system of subsets of odd size in a finite set. 
\end{abstract}

\noindent {\it 2010 MSC:} 15A75 (Primary); 05D05 (Secondary);  16W55 (Secondary); 13A02 (Secondary)

\noindent{\it Keywords:} Grassmann algebra, commutative subalgebra, intersecting set systems, commuting projections

\section{Introduction}\label{sec:intro} 

The {\it Grassmann algebra} (called also  {\it exterior algebra}) $E := E^{(n)}$ of an $n$-dimensional vector space  $\mathrm{Span}_{\mathbb{F}}\{v_1,\dots,v_n\}$ over a field $\mathbb{F}$ (assumed throughout to have characteristic different from $2$) is the associative $\mathbb{F}$-algebra given in terms of generators and relations as  
\[E^{(n)}=\mathbb{F}\langle v_1,...,v_n\mid \quad v_iv_j=-v_jv_i  \quad (1\le i,j\le n)\rangle.\] 
In the present paper we shall investigate the algebra structure of this fundamental object. The algebra $E$ is not commutative, but it is not far from being commutative: it has a large center, and satisfies the polynomial identity $[[x,y],z]=0$ (Lie nilpotency of index $2$). 
We are  interested  in the commutative subalgebras of $E$. Our main result Theorem~\ref{thm:maxcommsubalg} gives in particular the maximal dimension of a 
commutative subalgebra of $E$, and gives some partial results on their structure. It turns out that in case when $n$ is even,  all maximal (with respect to inclusion) subalgebras have the same dimension. When $n$ is odd,  there are maximal commutative subalgebras with different dimensions. 

Along the way we show in Theorem~\ref{thm:monomialsubalg} that to any commutative subalgebra of $E$ one can associate via a simple linear algebra process a sequence of equidimensional commutative subalgebras, the last one spanned by monomials (products of generators). This result has some independent interest, and establishes a tight connection between our question and the Erd\H os-Ko-Rado Theorem on intersecting set systems. 

The study of  commutative subalgebras in non-commutative algebras has a considerable literature. We mention only the theorem of  Schur \cite{schur} determining the maximal dimension of a commutative subalgebra of $\mathbb{F}^{n\times n}$, see \cite{jacobson} and \cite{gustafson} for alternative proofs. 
Our interest in maximal commutative subalgebras of the Grassmann algebra was inspired by \cite{szigeti-etal}, where the existence of large commutative subalgebras of $E$ is used as an obstruction for the existence  of an embedding of $E$  into the full matrix algebra $\mathbb{F}^{m\times m}$ with small $m$ (see Theorem 3.9 of \cite{szigeti-etal}).


\section{Square zero subspaces}\label{sec:squarezero}

For a subset $J\subseteq [n]:=\{1,\dots,n\}$ set $v_J:=v_{i_1}\cdots v_{i_k}$, where $J=\{i_1,\dots,i_k\}$ and $i_1<\dots<i_k$. 
Clearly, $\{v_J\mid J\subseteq [n]\}$ is an $\mathbb{F}$-vector space basis of $E^{(n)}$. We shall refer to the elements $v_J\in E$ as {\it monomials}. 
The Grassmann algebra is graded: 
\[E^{(n)}=\bigoplus_{k=0}^\infty E^{(n)}_k\mbox{ where }E^{(n)}_k=\mathrm{Span}_{\mathbb{F}}\{v_J\mid I\subseteq [n], \quad|J|=k\}\] 
(of course, for $k>n$ we have $E^{(n)}_k=\{0\}$). Sometimes we pay attention to the $\mathbb{Z}/2\mathbb{Z}$-grading induced by the above $\mathbb{Z}$-grading: 
\[E^{(n)}=E^{(n)}_{\overline{0}}\oplus E^{(n)}_{\overline{1}}\mbox{ where }E^{(n)}_{\overline{0}}:=\bigoplus_{k\textrm{ is  even}} E^{(n)}_k, \quad  
E^{(n)}_{\overline{1}}:=\bigoplus_{k\textrm{ is  odd}} E^{(n)}_k\]

The defining relations of $E:=E^{(n)}$ imply  the multiplication rules $v_Jv_K=(-1)^{|J|\cdot|K|}v_Kv_J$ and when $J\cap K\neq\emptyset$, we have $v_Jv_K=0$. 
It follows that $E_{\overline{0}}$ is contained in the center of $E$, and  the elements of $E_{\overline{1}}$ anticommute: $ab = -ba$ for any pair
$a, b \in E_{\overline{1}}$. In particular, $a, b \in E_{\overline{1}}$ commute if and only if $ab = 0$. So the commutative subalgebras of $E$ have a natural connection with 
square zero subspaces. For subspaces $C,D\subseteq E$ we write $CD$ for the subspace $\mathrm{Span}_{\mathbb{F}}\{cd\mid c\in C, d\in D\}$, and we call a subspace  
$D\subseteq E$ a {\it square zero subspace} if  $D^2=0$, that is, if $cd=0$ for all $c,d\in D$.  
A commutative subalgebra $A$  of $E$ is called  {\it maximal} if there is no commutative subalgebra of $E$ properly containing $A$. 
Similarly, a square zero subspace of $E_{\overline{1}}$ is called {\it maximal} if it is not properly contained in a square zero subspace of $E_{\overline{1}}$. 

\begin{proposition}\label{prop:strukt1} 
\begin{enumerate}
\renewcommand{\labelenumi}{(\roman{enumi})}
\item If $D\subseteq E_{\overline{1}}$ is a square zero subspace,  
then $K := E_{\overline{0}} D \subseteq E_{\overline{1}}$ is also a square zero subspace and $E_{\overline{0}} \oplus K$ is a commutative subalgebra of $E$.
\item The map $D\mapsto E_{\overline{0}}\oplus D$ gives a bijection between the maximal square zero subspaces in $E_{\overline{1}}$ and maximal commutative subalgebras of $E$. 
\end{enumerate}
\end{proposition}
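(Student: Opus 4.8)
The plan is to prove (i) by a short direct computation and then bootstrap to (ii) by establishing two facts: every commutative subalgebra containing $E_{\overline{0}}$ is $\mathbb{Z}/2$-graded with square zero odd part, and every maximal commutative subalgebra automatically contains $E_{\overline{0}}$. For (i): since elements of $E_{\overline{0}}$ have even and elements of $D$ odd $\mathbb{Z}/2$-degree, products land in $E_{\overline{1}}$, so $K=E_{\overline{0}}D\subseteq E_{\overline{1}}$. To see $K^2=0$ it suffices to check on spanning elements: for $x,y\in E_{\overline{0}}$ and $d,d'\in D$, centrality of $E_{\overline{0}}$ gives $(xd)(yd')=xy\,dd'=0$ because $D^2=0$. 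Moreover $E_{\overline{0}}\oplus K$ is a subspace containing $1$, it is closed under multiplication ($E_{\overline{0}}E_{\overline{0}}\subseteq E_{\overline{0}}$, $E_{\overline{0}}K\subseteq K$ since $K$ is a left $E_{\overline{0}}$-submodule, and $K^2=0$), and it is commutative because $E_{\overline{0}}$ is central while any two elements of $K$ have zero product; directness of the sum is clear from $E_{\overline{0}}\cap E_{\overline{1}}=0$.

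For (ii) I would first record the structural fact that any commutative subalgebra $A$ with $E_{\overline{0}}\subseteq A$ satisfies $A=E_{\overline{0}}\oplus D$ with $D:=A\cap E_{\overline{1}}$ a square zero subspace. Indeed, writing $a\in A$ as $a_0+a_1$ (even plus odd part), one has $a_0\in E_{\overline{0}}\subseteq A$, hence $a_1=a-a_0\in A$; so $A=(A\cap E_{\overline{0}})\oplus(A\cap E_{\overline{1}})=E_{\overline{0}}\oplus D$, and for $d,d'\in D$ commutativity of $A$ gives $dd'=d'd$ while oddness gives $dd'=-d'd$, so $dd'=0$ as $\mathrm{char}(\mathbb{F})\neq 2$. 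Next I would show that any maximal commutative subalgebra $A$ contains $E_{\overline{0}}$: the span $B:=E_{\overline{0}}A=\mathrm{Span}_{\mathbb{F}}\{xa:x\in E_{\overline{0}},\ a\in A\}$ is again a commutative subalgebra, since centrality of $E_{\overline{0}}$ and commutativity of $A$ yield $(xa)(ya')=xy\,aa'$, which lies in $B$ and is symmetric in the two factors; and $B$ contains both $A$ (take $x=1$) and $E_{\overline{0}}$ (take $a=1$), so maximality forces $A=B\supseteq E_{\overline{0}}$.

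Combining these, every maximal commutative subalgebra equals $E_{\overline{0}}\oplus D$ with $D=A\cap E_{\overline{1}}$ square zero, and this $D$ is a maximal square zero subspace: if $D'\supsetneq D$ were square zero in $E_{\overline{1}}$, then by (i) the commutative subalgebra $E_{\overline{0}}\oplus E_{\overline{0}}D'$ would strictly contain $A$, contradicting maximality. Conversely, if $D$ is a maximal square zero subspace, then $E_{\overline{0}}D$ is a square zero subspace containing $D$, so $E_{\overline{0}}D=D$, and (i) shows $E_{\overline{0}}\oplus D$ is a commutative subalgebra; it is maximal because any commutative subalgebra containing it contains $E_{\overline{0}}$, hence by the structural fact equals $E_{\overline{0}}\oplus D'$ with $D'\supseteq D$ square zero, forcing $D'=D$. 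Finally the assignments $D\mapsto E_{\overline{0}}\oplus D$ and $A\mapsto A\cap E_{\overline{1}}$ are mutually inverse, which gives the asserted bijection.

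I expect no single step to be a serious obstacle; the one point deserving a little care is the verification that $E_{\overline{0}}A$ is again a commutative subalgebra — that adjoining the central even part to a commutative subalgebra preserves commutativity — since it is precisely this that reduces the problem to subalgebras containing $E_{\overline{0}}$ and makes the bijection transparent.
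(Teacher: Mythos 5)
Your proof is correct and follows essentially the same route as the paper: decompose a commutative subalgebra containing $E_{\overline{0}}$ as $E_{\overline{0}}\oplus(A\cap E_{\overline{1}})$, use anticommutativity of odd elements plus $\mathrm{char}(\mathbb{F})\neq 2$ to get $D^2=0$, and match maximality on both sides. The only difference is that you explicitly verify (via $E_{\overline{0}}A$) the claim the paper states without proof, namely that a maximal commutative subalgebra contains the center; this is a welcome addition, with the trivial caveat that one should first note $1\in A$ for a maximal commutative $A$ (since $A+\mathbb{F}1$ is again commutative).
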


\begin{proof}
The statement (i)  follows from the centrality of $E_{\overline{0}}$. 
In order to show (ii), suppose that $A\supseteq E_{\overline{0}}$ is a commutative subalgebra in $E$. Then  as a vector space, $A = E_{\overline{0}} \oplus D$, where $D := A \cap E_{\overline{1}}$. Moreover, $D$ is an $E_{\overline{0}}$-submodule in $E_{\overline{1}}$.  
Since elements of $E_{\overline{1}}$ anticommute, this forces that $D^2 = 0$. 
Taking into account (i) we get that $D\mapsto E_{\overline{0}}\oplus D$ gives a bijection beween square zero $E_{\overline{0}}$-submodules of $E_{\overline{1}}$ and commutative subalgebras of $E$ that contain $E_{\overline{0}}$. This bijection restricts to the bijection claimed in (ii), since a maximal commutative subalgebra of $E$ necessarily contains the center $E_{\overline{0}}$, and a maximal square zero subspace of $E_{\overline{1}}$ is necessarily an $E_{\overline{0}}$-submodule.  
\end{proof}

\begin{remark}\label{remark:schur} 
{\rm It is interesting to compare the above "structure theorem" with the case of the matrix algebra $\mathbb{F}^{n\times n}$, where by a theorem of Schur \cite{schur}, a commutative subalgebra of maximal possible dimension is also of the form $\mathrm{Center}(\mathbb{F}^{n\times n})\oplus D=\mathbb{F}I_n  \oplus D$, where $D$ is a subspace with $D^2=0$ and $I_n$ stands for the identity matrix. However, unlike for  $E$,  in  $\mathbb{F}^{n\times n}$ not all maximal subalgebras are of this form. }
\end{remark}

Maximal square zero subspaces in $E_{\overline{1}}$ can be characterized in terms of certain bilinear maps on $E_{\overline{1}}$ defined as the composition of the multiplication map  $E_{\overline{1}}\times E_{\overline{1}}\to E_{\overline{0}}$ and a projection from $E_{\overline{0}}$ to one of its homogeneous components. 
Recall that any $x\in E^{(n)}$ can be uniquely written as 
\begin{equation}\label{eq:x_J} x=\sum_{J\subseteq [n]}x_Jv_J.
\end{equation} 
Define a  bilinear map $\phi$ as follows: 
\begin{itemize}
\item if $n$ is even then $\Phi:E_{\overline{1}}\times E_{\overline{1}}\rightarrow E_n$, $\Phi(a,b)=(ab)_{[n]}v_{[n]}$;  
\item if $n$ is odd then $\Phi:E_{\overline{1}}\times E_{\overline{1}}\rightarrow E_{n-1}$, $\Phi(a,b)=\sum_{J\in \binom{[n]}{n-1}}(ab)_Jv_J$. 
\end{itemize}  
Here $\binom{[n]}{k}$ stands for the set of $k$-element subsets of $[n]$. 
Since the multiplication map on $E_{\overline{1}}$ is skew-symmetric, the bilinear map $\Phi$ is also skew-symmetric. 
Moreover,  $E^{(n)}_{n}$ can be identified with $\mathbb{F}$, so $\Phi$ is a skew-symmetric bilinear form when $n$ is even. It is a non-degenerate form, since if $x_J\neq 0$ for some $J\subseteq[n]$ and $x\in E_{\overline{1}}$, then $v_{[n]\setminus J}\in E_{\overline{1}}$ and 
$\Phi(x,v_{[n]\setminus J})=x_J\neq 0$. So for $n$ even $(E_{\overline{1}},\Phi)$ is a symplectic vector space. 

Given a subspace  $D\subseteq E_{\overline{1}}$ we write 
\[D^\perp:=\{x\in E_{\overline{1}}:\Phi(x,w)=0\text{ }\forall w\in D  \}.\]

\begin{proposition}\label{prop:strukt2}
 A subspace $D\subseteq E_{\overline{1}}$ is a maximal square zero subspace in $E_{\overline{1}}$ if and only if $D$ is an $E_{\overline{0}}$-submodule  and 
 $D=D^\perp$.  
\end{proposition}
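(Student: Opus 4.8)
The plan is to prove both implications by analyzing how the square zero condition interacts with the projection defining $\Phi$. The key observation is that for $a,b\in E_{\overline 1}$, the product $ab$ lies in $E_{\overline 0}$, but its component $E_{\overline 0}$-degree is constrained: if $a,b$ are homogeneous of degrees $p,q$ (both odd), then $ab\in E_{p+q}$, and $p+q\le n$ forces $p+q\in\{$ values $\le n$ of the same parity as $n$ when the degrees are extremal$\}$. More precisely, the point is that $(ab)_J$ for $|J|=n$ (when $n$ even) or $|J|=n-1$ (when $n$ odd) is the \emph{top-degree} part of $ab$, and $ab=0$ if and only if every homogeneous component of $ab$ vanishes. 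So I expect the argument to proceed by showing that for an $E_{\overline 0}$-submodule $D$, the condition $D^2=0$ is actually \emph{equivalent} to $\Phi(a,b)=0$ for all $a,b\in D$ — the extra module structure lets one recover all the lower components of products from the top one.

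First I would establish the easy direction: if $D$ is a square zero subspace then $ab=0$ for all $a,b\in D$, hence in particular the projection $\Phi(a,b)=0$, so $D\subseteq D^\perp$; and if moreover $D$ is maximal then I claim $D=D^\perp$. For this, suppose $x\in D^\perp$; I want to show $D+\mathbb{F}x$ (or rather the $E_{\overline 0}$-submodule it generates) is still square zero, contradicting maximality unless $x\in D$. This is where the main work lies: I need that $\Phi(a,x)=0$ together with $D^2=0$ forces $ax=0$ for all $a\in D$, and $x^2=0$. The idea is that since $D$ is an $E_{\overline 0}$-submodule, for any monomial $v_K\in E_{\overline 0}$ with $v_K a\in D$ we have $\Phi(v_K a, x)=0$, i.e. the top component of $v_K a x$ vanishes; running over all suitable $K$ one recovers every homogeneous component of $ax$. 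Concretely, the component $(ax)_J$ appears as the top component of $v_{[n]\setminus J}\cdot a x$ up to sign (when $n$ is even), and $v_{[n]\setminus J}a$ can be arranged to lie in $D$ because $D$ is a module and one can factor appropriately — this uses that $|[n]\setminus J|$ has the right parity since $|J|$ and $|ax$-degree$|$ are constrained. The odd case is analogous but one pairs against $v_L$ with $|L|=n-1-|J|$.

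For the converse, suppose $D$ is an $E_{\overline 0}$-submodule with $D=D^\perp$. From $D\subseteq D^\perp$ and the recovery argument just sketched, $D^2=0$ — this is the same computation showing that $\Phi$ vanishing on a module forces the full product to vanish. Then I must show $D$ is \emph{maximal} among square zero subspaces of $E_{\overline 1}$: if $D\subsetneq D'$ with $D'$ square zero, pick $x\in D'\setminus D$; then $\Phi(x,a)=0$ for all $a\in D$ (since $xa=0$), so $x\in D^\perp=D$, a contradiction. Here I should be slightly careful that a maximal square zero subspace is automatically an $E_{\overline 0}$-submodule — but this is exactly what Proposition~\ref{prop:strukt1} and its proof already give, via $E_{\overline 0}D\subseteq E_{\overline 1}$ being square zero whenever $D$ is.

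The main obstacle I anticipate is the bookkeeping in the "recovery" step: showing precisely that vanishing of the single projected form $\Phi$ on an $E_{\overline 0}$-submodule $D$ propagates to vanishing of \emph{all} homogeneous components of products $ab$. One must check the parity constraints so that the multiplier $v_{[n]\setminus J}$ (even case) or $v_L$ with $|L|=n-1-|J|$ (odd case) indeed has \emph{even} cardinality — equivalently that $(ab)_J$ can only be nonzero for $J$ of a parity making $[n]\setminus J$ even — and that multiplication by these even monomials keeps us inside $D$ while picking out exactly the desired coefficient with a controllable sign. Once the parity arithmetic is pinned down, each implication reduces to a short linear-algebra argument of the "orthogonal complement" type.
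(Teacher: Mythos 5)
Your proposal is correct and follows essentially the same route as the paper: the heart of both arguments is that, because $D$ is an $E_{\overline{0}}$-submodule, any nonzero homogeneous component of a product $ab$ can be lifted to the top degree detected by $\Phi$ by multiplying with a suitable even monomial that stays inside $D$ (the paper phrases this as: if $xy\ne 0$ there is a homogeneous $z\in E_{\overline{0}}$ with $0\ne xyz=\Phi(x,yz)$ and $yz\in DE_{\overline{0}}=D$). Your parity bookkeeping is exactly the check needed, and the remaining steps (using Proposition~\ref{prop:strukt1}(i) for the module property, and $x^2=0$ being automatic for $x\in E_{\overline{1}}$ by anticommutativity in characteristic $\ne 2$) match the paper's proof.
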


\begin{proof} 
Let $D$ be an $E_{\overline{0}}$-submodule  of $E_{\overline{1}}$ with  $D=D^\perp$. Suppose that there exist $x,y\in D$ with $xy\ne 0$. Then 
there exists a (homogeneous) $z\in E_{\overline{0}}$ such that $0\ne xyz=\Phi(x,yz)$, hence $yz\notin D^\perp$. This is a contradiction, since  $yz\in DE_{\overline{0}}=D=D^\perp$. Thus $D^2=0$. 

To show the reverse implication suppose that $D\subseteq E_{\overline{1}}$ is a maximal square zero subspace in $E_{\overline{1}}$. By Proposition~\ref{prop:strukt1} (i)   (and since $1\in E_{\overline{0}}$) we have $D= DE_{\overline{0}}$. 
Clearly $D^2=0$ implies $\Phi(x,y)=0$ for all $x,y\in D$, so  $D\subseteq D^\perp$. 
Moreover, if $x\in E_{\overline{1}}\setminus D$ then by maximality of $D$  there exists $y\in D$ such that $0\ne xy\in E_{\overline{0}}$. 
Thus there  exists a $z\in E_{\overline{0}}$ such that $0\ne xyz=\Phi(x,yz)$. Here $yz\in DE_{\overline{0}}=D$, showing $x\notin D^\perp$. So  we proved $D^\perp\subseteq D$. Taking into account the reverse inclusion we get $D=D^\perp$, and the proof is finished. 
\end{proof}

\begin{corollary}\label{cor:even}
If $n\ge 2$ is even, then any  maximal commutative subalgebra of   $E^{(n)}$ has dimension $3\cdot 2^{n-2}$.
\end{corollary}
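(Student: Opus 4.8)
The plan is to combine the two structure results of this section with the standard dimension formula for the orthogonal complement with respect to a non-degenerate bilinear form. First I would record the dimensions of the spaces in play. Since $\{v_J\mid J\subseteq[n]\}$ is an $\mathbb{F}$-basis of $E^{(n)}$, we have $\dim_{\mathbb{F}}E^{(n)}=2^n$; and for $n\ge 1$ exactly half of the subsets of $[n]$ have even cardinality and half odd cardinality, so $\dim E_{\overline{0}}^{(n)}=\dim E_{\overline{1}}^{(n)}=2^{n-1}$.

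Now let $A$ be a maximal commutative subalgebra of $E^{(n)}$. By Proposition~\ref{prop:strukt1}(ii), $A=E_{\overline{0}}\oplus D$ for some maximal square zero subspace $D\subseteq E_{\overline{1}}$, and by Proposition~\ref{prop:strukt2} this $D$ satisfies $D=D^{\perp}$ (the complement being taken with respect to $\Phi$). Since $n$ is even, the discussion preceding Proposition~\ref{prop:strukt2} already establishes that $\Phi$ is a non-degenerate skew-symmetric bilinear form on $E_{\overline{1}}$, i.e. $(E_{\overline{1}},\Phi)$ is a symplectic space. The only computational step then is the classical fact that for a non-degenerate bilinear form on a finite-dimensional space $V$ and any subspace $W\subseteq V$ one has $\dim W+\dim W^{\perp}=\dim V$; applying this with $V=E_{\overline{1}}$ and $W=D$, and using $D=D^{\perp}$, yields $2\dim D=\dim E_{\overline{1}}=2^{n-1}$, so $\dim D=2^{n-2}$. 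Consequently $\dim A=\dim E_{\overline{0}}+\dim D=2^{n-1}+2^{n-2}=3\cdot 2^{n-2}$.

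To be sure the statement is not vacuous, I would also note that maximal commutative subalgebras exist: $E_{\overline{0}}$ is itself a commutative subalgebra, and as $E^{(n)}$ is finite-dimensional, any commutative subalgebra is contained in a maximal one. I do not anticipate a real obstacle here, since all the substantive work is already contained in Propositions~\ref{prop:strukt1} and~\ref{prop:strukt2}; the one point to be careful about is that the equality $D=D^{\perp}$ is being used inside the \emph{symplectic} space $(E_{\overline{1}},\Phi)$ — which is exactly what the hypothesis that $n$ be even guarantees — so that the dimension formula for $D^{\perp}$ applies and forces $\dim D=2^{n-2}$ independently of which maximal $D$ (equivalently, which maximal $A$) we started with.
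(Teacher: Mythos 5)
Your proof is correct and follows exactly the paper's own argument: decompose $A=E_{\overline{0}}\oplus D$ via Proposition~\ref{prop:strukt1}(ii), invoke Proposition~\ref{prop:strukt2} to get $D=D^{\perp}$, and conclude that $D$ is Lagrangian in the symplectic space $(E_{\overline{1}},\Phi)$, hence of dimension $2^{n-2}$. The only difference is that you spell out the dimension formula $\dim W+\dim W^{\perp}=\dim V$ explicitly, which the paper leaves implicit.
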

\begin{proof} Let $A$ be a maximal commutative subalgebra of $E^{(n)}$. Then by Proposition~\ref{prop:strukt1} (ii) $A=E_{\overline{0}}\oplus D$, where  
$D\subseteq E_{\overline{1}}$ is a maximal square zero subspace. By Proposition~\ref{prop:strukt2} we get $D=D^\perp$ is a Lagrangian subspace in the symplectic vector space $(E_{\overline{1}},\Phi)$,  hence $\dim(D)=\dim(E_{\overline{1}})/2=2^{n-2}$. 
So $\dim(A)=\dim(E_{\overline{0}})+2^{n-2}=3\cdot 2^{n-2}$.  
\end{proof}


\section{Commuting projections}\label{sec:projections} 

Let $W$ be a vector space over a field  $\mathbb{F}$, and $\pi_1,\dots,\pi_r\in\mathrm{End}_{\mathbb{F}}(W)$ pairwise commuting projections, so $\pi_i^2=\pi_i$ and $\pi_i\pi_j=\pi_j\pi_i$ 
for all $i,j\in [r]$. Recall the  corresponding  direct sum decompositions $W=\ker(\pi_j)\oplus \mathrm{im}(\pi_j)$. 
Given a subset $J \subseteq [r]$ we set 
\[W_J:=\bigcap_{j\in J} \ker(\pi_j)\cap \bigcap_{j\notin J} \mathrm{im}(\pi_j).\]
Let $\mathrm{Gras}(W)$ stand for the set of subspaces of $W$, and for $j=1,\dots,r$ define a map 
\begin{equation}\label{eq:gammadef}\gamma_j:\mathrm{Gras}(W)\to \mathrm{Gras}(W), \qquad 
D\mapsto \ker(\pi_j\vert_D)\oplus \mathrm{im}(\pi_j\vert_D)\end{equation} 
where $\pi_j\vert_D:D\to W$ stands for the restriction of $\pi_j$ to the subspace  $D\subseteq W$. 
Note that for $A,D\in\mathrm{Gras}(E^{(n)})$ we have $\gamma_j(A)+\gamma_j(D)\subseteq \gamma_j(A+D)$ (this inclusion is proper in general). It is also obvious that 
\begin{equation}\label{eq:gamma-subspace}
A\subseteq D  \quad \mbox{ implies }\quad \gamma_j(A)\subseteq \gamma_j(D).\end{equation}  

\begin{lemma}\label{lemma:decomposition}
Take $D\in\mathrm{Gras}(W)$ and denote $A:=\gamma_1\dots\gamma_r(D)$. Then we have the equalities 
\begin{itemize}
\item[(i)] $\dim(A)=\dim(D)$; 
\item[(ii)] $A=\bigoplus_{J\subseteq[r]}(A\cap W_J).$ 
\end{itemize}
\end{lemma}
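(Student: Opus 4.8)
The plan is to prove both statements by induction on $r$, the number of commuting projections, peeling off one projection at a time. The key observation is that $\gamma_j$ is idempotent in a suitable sense and that applying all of $\gamma_1,\dots,\gamma_r$ produces a subspace that is simultaneously ``diagonalized'' with respect to every $\pi_j$. For part (i), the point is that for any subspace $D$ and any single projection $\pi_j$, the map $D \to \ker(\pi_j\vert_D) \oplus \mathrm{im}(\pi_j\vert_D)$ sending $d \mapsto (d - \pi_j(d), \pi_j(d))$ is a linear isomorphism: it is clearly surjective onto the internal direct sum by definition of $\gamma_j(D)$, and injective because $d - \pi_j(d)$ and $\pi_j(d)$ together determine $d$ by addition. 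Hence $\dim(\gamma_j(D)) = \dim(D)$ for every $j$ and every $D$, and (i) follows by iterating.

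For part (ii), I would first record the crucial stability property: once $\gamma_j$ has been applied to a subspace $C$, the result $\gamma_j(C)$ is $\pi_j$-invariant, meaning $\pi_j(\gamma_j(C)) \subseteq \gamma_j(C)$ and in fact $\gamma_j(C) = (\gamma_j(C)\cap\ker\pi_j)\oplus(\gamma_j(C)\cap\mathrm{im}\,\pi_j)$. The harder and more delicate point is that this invariance is not destroyed by subsequently applying the other $\gamma_i$'s; this is where the pairwise commutativity of the projections is essential. Concretely, I would prove the lemma: if $C$ is $\pi_i$-invariant (in the split sense above) and $i \neq j$, then $\gamma_j(C)$ is still $\pi_i$-invariant. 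The proof uses that $\pi_i\pi_j = \pi_j\pi_i$, so $\pi_i$ maps $\ker(\pi_j\vert_C)$ into $\ker\pi_j$ and $\mathrm{im}(\pi_j\vert_C)$ into $\mathrm{im}\,\pi_j$, combined with the fact that $\pi_i$ already respects the splitting of $C$. Granting this, $A = \gamma_1\cdots\gamma_r(D)$ is $\pi_j$-invariant for all $j$ simultaneously, and a standard argument with finitely many commuting projections shows any subspace invariant under all of them decomposes as $A = \bigoplus_{J\subseteq[r]}(A\cap W_J)$: one simply writes each $a\in A$ via the $r$-fold splitting $a = \sum_J e_J(a)$ where $e_J = \prod_{j\in J}(1-\pi_j)\prod_{j\notin J}\pi_j$, notes that these idempotents sum to the identity and are orthogonal, and checks $e_J(A)\subseteq A$ using invariance, while $e_J(W) = W_J$.

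The main obstacle I anticipate is the inductive bookkeeping in the invariance lemma: one must be careful that ``$\pi_i$-invariant'' is the right notion (the splitting $C = (C\cap\ker\pi_i)\oplus(C\cap\mathrm{im}\,\pi_i)$, not merely $\pi_i(C)\subseteq C$), and verify it is preserved under $\gamma_j$ using only commutativity. Once that lemma is in hand, both (i) and (ii) follow by a clean induction on $r$: the base case $r=0$ is trivial (or $r=1$ is the direct computation above), and the inductive step applies $\gamma_r$ last to the subspace $\gamma_1\cdots\gamma_{r-1}(D)$, which by induction is already decomposed with respect to $\pi_1,\dots,\pi_{r-1}$; the invariance lemma guarantees that decomposition survives, while $\gamma_r$ adds the splitting along $\pi_r$.
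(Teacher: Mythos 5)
Your overall strategy is essentially the one used in the paper: the ``invariance lemma'' you isolate (if $C$ splits as $(C\cap\ker\pi_i)\oplus(C\cap\mathrm{im}\,\pi_i)$ and $i\neq j$, then so does $\gamma_j(C)$) is precisely the paper's key observation \eqref{eq:W'W''}, proved exactly as you indicate from $\pi_i\pi_j=\pi_j\pi_i$ together with $\ker(\pi_i)\cap\mathrm{im}(\pi_i)=\{0\}$. The only real difference is the endgame: the paper restricts the remaining projections to $W'=\ker(\pi_r)$ and $W''=\mathrm{im}(\pi_r)$ and invokes the induction hypothesis on each summand separately, whereas you derive simultaneous splitness with respect to all the $\pi_j$ and then decompose with the orthogonal idempotents $e_J=\prod_{j\in J}(1-\pi_j)\prod_{j\notin J}\pi_j$; the two are interchangeable, and your version is, if anything, slightly cleaner to state. (A cosmetic caveat: in $\gamma_1\cdots\gamma_r(D)$ the map $\gamma_r$ is applied \emph{first}, not last; since your argument shows that splitness with respect to $\pi_j$ is created by $\gamma_j$ and preserved by every subsequently applied $\gamma_i$ with $i\neq j$, the order is immaterial and this slip does no harm.)

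There is, however, one step that fails as written: your justification of (i). The map $d\mapsto(d-\pi_j(d),\pi_j(d))$ does not take values in $\ker(\pi_j\vert_D)\oplus\mathrm{im}(\pi_j\vert_D)$, because $d-\pi_j(d)$ lies in $\ker(\pi_j)$ but in general not in $D$, hence not in $\ker(\pi_j\vert_D)=D\cap\ker(\pi_j)$. Indeed, if such an isomorphism existed, summing the two components would give $d\in\gamma_j(D)$ for every $d\in D$ and hence $D=\gamma_j(D)$, which already fails for $D=\mathbb{F}(v_1+v_2)$ and $\gamma_2$ as in Example~\ref{ex:1}, where $\gamma_2(D)=\mathbb{F}v_1\neq D$. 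The correct one-line argument is rank--nullity applied to $\pi_j\vert_D\colon D\to W$: it gives $\dim D=\dim\ker(\pi_j\vert_D)+\dim\mathrm{im}(\pi_j\vert_D)$, and the sum defining $\gamma_j(D)$ is direct because $\ker(\pi_j)\cap\mathrm{im}(\pi_j)=\{0\}$. With that repair, the rest of your argument goes through.
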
 

\begin{proof} The case $r=1$ is a basic fact of linear algebra. 
Statement (i) follows by a repeated application of the special case $r=1$ of (i). To prove statement (ii) we apply induction on $r$.  Suppose $r>1$. 
Set $W':=\ker(\pi_r)$, $W'':=\mathrm{im}(\pi_r)$. Since  $\pi_j\pi_r=\pi_r\pi_j$ for $j=1,\dots,r-1$, it follows that $\pi_j(W')\subseteq W'$, $\pi_j(W'')\subseteq W''$, 
and if $C=C'+C''$ with $C'\subseteq W'$, $C''\subseteq W''$, then 
\begin{equation}\label{eq:W'W''}\gamma_j(C')\subseteq W', \quad  \gamma_j(C'')\subseteq W'', \mbox{ and }
\gamma_j(C)=\gamma_j(C')\oplus \gamma_j(C''). \end{equation} 
 By definition of $\gamma_r$ we have $\gamma_r(D)=(\gamma_r(D)\cap W')\oplus (\gamma_r(D)\cap W'')$, hence by a repeated application of \eqref{eq:W'W''} we get  
\begin{equation}\label{eq:A}A=(\gamma_1\dots\gamma_{r-1})(\gamma_r(D))=(A\cap W')\oplus (A\cap W'')\end{equation}
\begin{equation}\label{eq:AcapW'} 
 A\cap W'=\gamma_1\dots \gamma_{r-1} (\gamma_r(D)\cap W') \end{equation} 
\begin{equation}\label{eq:AcapW''}A\cap W''=\gamma_1\dots \gamma_{r-1}(\gamma_r(D)\cap W''). \end{equation}   
For a subset $J\subseteq [r-1]$ set 
\[W'_J:=\bigcap_{j\in J} \ker(\pi_j\vert_{W'})\cap \bigcap_{j\in [r-1]\setminus J} \mathrm{im}(\pi_j\vert_{W'})\]
\[W''_J:=\bigcap_{j\in J} \ker(\pi_j\vert_{W''})\cap \bigcap_{j\in [r-1]\setminus J} \mathrm{im}(\pi_j\vert_{W''}).\] 
Applying the induction hypothesis to \eqref{eq:AcapW'} and \eqref{eq:AcapW''} we get 
\begin{equation}\label{eq:induction} A\cap W'=\bigoplus_{J\subseteq [r-1]} A\cap W'_J \quad \mbox{ and  }\quad 
A\cap W''=\bigoplus_{J\subseteq [r-1]} A\cap W''_J.\end{equation}  
Observe that 
\begin{equation}\label{eq:W'_J} W'_J=W_{J\cup \{r\}}\quad\mbox{ and }\quad W''_J=W_J. \end{equation} 
Now  \eqref{eq:A}, \eqref{eq:induction} and 
\eqref{eq:W'_J} yield the statement (ii). 
\end{proof} 

\begin{remark}{\rm Note that though the $\pi_1,\dots,\pi_r$ commute, the maps $\gamma_1,\dots,\gamma_r$ do not commmute, i.e. 
$\gamma_i\gamma_j(D)$ may be different from $\gamma_j\gamma_i(D)$ (see Example \ref{ex:1} and \ref{ex:4}).}
\end{remark}


\section{Projections on the Grassmann algebra}\label{sec:projectionsongrassmann}

We shall apply Lemma~\ref{lemma:decomposition} for the case when $W=E^{(n)}$ is the Grassmann algebra. 
For $i=1,\dots n$ define the linear map $\pi_i:E^{(n)}\to E^{(n)}$ by $\pi_i(x):=\sum_{i\notin J\subseteq [n]}x_Jv_J$ (see \eqref{eq:x_J} for the notation). 
Then $\pi_i^2=\pi_i$ and $\pi_i\pi_j=\pi_j\pi_i$, so the considerations of Section~\ref{sec:projections} apply for these projections. 
Keeping the notation of Section~\ref{sec:projections} define $\gamma_i:\mathrm{Gras}(E^{(n)})\to \mathrm{Gras}(E^{(n)})$ as in 
\eqref{eq:gammadef}. Observe that $E^{(n)}_J=W_J$ is the $1$-dimensional subspace spanned by $v_J$. 
An extra feature now is that the $\pi_i$ are algebra homomorphisms, moreover,  
\begin{equation}\label{eq:squarezero} 
\ker(\pi_i)^2=\{0\}\end{equation} 
since $\ker(\pi_i)=v_iE^{(n)}=E^{(n)}v_i$ and $v_i^2=0$.  

\begin{proposition}\label{prop:gammaproperties} 
 Let $D,A\in \mathrm{Gras}(E^{(n)})$ be subspaces. The following hold for $\gamma_i$: 
\begin{itemize}
\item[(i)]  We have $\gamma_i(A)\gamma_i(D)\subseteq \gamma_i(AD)$. 
\item[(ii)] If $D$ is a subalgebra of $E^{(n)}$, then $\gamma_i(D)$ is also a subalgebra. 
\item[(iii)] If $D^2=\{0\}$, then $\gamma_i(D)^2=\{0\}$. 
\item[(iv)] If $D$ is a right/left ideal in $E^{(n)}$ then $\gamma_i(D)$ is also a right/left ideal in $E^{(n)}$.
\item[(v)] If $D$ is a commutative subalgebra of $E^{(n)}$, then $\gamma_i(D)$ is  a commutative subalgebra of $E^{(n)}$. 
\end{itemize}  
\end{proposition}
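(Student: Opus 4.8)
The plan is to establish the five items essentially in the order given, since each later item leans on the structure exposed by the earlier ones. The central computational fact I would isolate first is an explicit description of $\gamma_i(D)$: writing every $x\in E^{(n)}$ as $x = \pi_i(x) + v_i y$ where $y = y(x) \in E^{(n)}$ is the "coefficient of $v_i$" (well-defined modulo $\ker(\pi_i) = v_i E^{(n)}$, so well-defined as an element of $\mathrm{im}(\pi_i)$), one has $\gamma_i(D) = \ker(\pi_i|_D) \oplus \pi_i(D)$, where $\ker(\pi_i|_D) = D \cap v_i E^{(n)}$ and $\pi_i(D) = \{\pi_i(x) : x \in D\}$. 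So concretely $\gamma_i(D)$ is spanned by the elements $v_i y(x)$ for $x\in D$ together with the elements $\pi_i(x)$ for $x\in D$. I would record this first and use it throughout.

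For (i), take $a', d' \in \gamma_i(D)$; each is a sum of a term in $D\cap v_iE^{(n)}$ and a term of the form $\pi_i(x)$, $x\in D$. Using that $\pi_i$ is an algebra homomorphism and $\pi_i(D)\pi_i(A) = \pi_i(DA) \subseteq$ (the $\pi_i(\cdot)$ part of $\gamma_i(AD)$, after noting $A$ vs $D$ order — but both $AD$ and $DA$ occur, so I would actually prove the symmetric statement $\gamma_i(A)\gamma_i(D) \subseteq \gamma_i(AD + DA)$ or simply restrict to the cases where $D$ is a subalgebra/commutative, where $AD=DA$ or $D=A$ makes this a non-issue; in fact for the applications (ii)--(v) one only needs the case $A=D$). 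For a product involving a factor in $v_iE^{(n)}$: $v_i E^{(n)}$ is a two-sided ideal that squares to zero by \eqref{eq:squarezero}, so products of two such factors vanish, and a product $(v_iy)\cdot\pi_i(x)$ lies in $v_iE^{(n)}\cap (D\text{-generated stuff})$ — here I need $v_i y \cdot \pi_i(x) = v_i\, y\, \pi_i(x)$ and to check this lands in $D A \cap v_iE^{(n)} \subseteq \ker(\pi_i|_{DA})$, which it does since $v_iy \in D$ and $\pi_i(x)$ is a limit/combination of elements $x - v_iy(x)$ with $x \in A$... — the bookkeeping here is the one genuinely fiddly point, and I would handle it by writing $\gamma_i(D) \subseteq D + v_iE^{(n)}$ and $\gamma_i(D)\subseteq \pi_i(D) + (D\cap v_iE^{(n)})$ simultaneously. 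Then (i) reduces to: $(D + v_iE^{(n)})(A + v_iE^{(n)}) \subseteq DA + v_iE^{(n)}$ (clear, using that $v_iE^{(n)}$ is a square-zero two-sided ideal) intersected with a parallel inclusion on the $\pi_i$-image side via the homomorphism property of $\pi_i$, and comparing with $\gamma_i(AD) \supseteq \pi_i(AD) \oplus (AD\cap v_iE^{(n)})$.

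Items (ii)--(v) are then formal consequences of (i) together with \eqref{eq:gamma-subspace} and Lemma~\ref{lemma:decomposition}(i): if $D$ is a subalgebra then $DD = D$, so $\gamma_i(D)\gamma_i(D) \subseteq \gamma_i(DD) = \gamma_i(D)$, and since $1\in D$ forces $1 = \pi_i(1) \in \pi_i(D) \subseteq \gamma_i(D)$, this gives (ii). Item (iii) is immediate from (i) with $A=D$: $\gamma_i(D)^2 \subseteq \gamma_i(D^2) = \gamma_i(\{0\}) = \{0\}$. For (iv), a right ideal satisfies $DE^{(n)} \subseteq D$; applying (i) in the form $\gamma_i(D)\gamma_i(E^{(n)}) \subseteq \gamma_i(DE^{(n)}) \subseteq \gamma_i(D)$ and noting $\gamma_i(E^{(n)}) = E^{(n)}$ (since $E^{(n)} = \ker\pi_i \oplus \mathrm{im}\,\pi_i$ already) yields $\gamma_i(D)E^{(n)}\subseteq\gamma_i(D)$; the left-ideal case is symmetric. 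Finally (v): by (ii), $\gamma_i(D)$ is a subalgebra, so it remains to check commutativity, i.e. $\gamma_i(D)^2$ behaves symmetrically — but a cleaner route is to invoke Proposition~\ref{prop:strukt1}(ii)-style reasoning: a commutative subalgebra $D$ decomposes (after adding the center, which is harmless for checking commutativity of the image) with $D\cap E_{\overline 1}$ square-zero, and $\gamma_i$ preserves the $\mathbb Z/2$-grading because $\pi_i$ does; then (iii) applied to $D \cap E_{\overline 1}$ shows $\gamma_i(D)\cap E_{\overline 1}$ is square-zero, while $\gamma_i(D)\cap E_{\overline 0}$ is central, so $\gamma_i(D)$ is commutative. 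I expect the only real obstacle to be the careful verification of the mixed-term containment in (i); everything after that is bookkeeping.
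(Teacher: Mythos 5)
Your treatment of (i)--(iv) is essentially the paper's argument: the paper also decomposes $\gamma_i(D)=\ker(\pi_i\vert_D)\oplus\pi_i(D)$, uses that $\ker(\pi_i)=v_iE^{(n)}$ is a two-sided ideal with square zero to get the identity $xy=\pi_i(x)y$ and $yx=y\pi_i(x)$ for $y\in\ker(\pi_i)$, shows $\pi_i(A)\ker(\pi_i\vert_D)+\ker(\pi_i\vert_A)\pi_i(D)\subseteq\ker(\pi_i\vert_{AD})$ and $\pi_i(A)\pi_i(D)=\pi_i(AD)$, and then derives (ii)--(iv) formally from (i) exactly as you do. One correction to your hesitation in (i): there is no $AD$ versus $DA$ issue. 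Every generator of $\gamma_i(A)\gamma_i(D)$ has its first factor built from $A$ and its second from $D$, so each cross term equals $\ker(\pi_i\vert_A)D$ or $A\ker(\pi_i\vert_D)$, which lands in $AD\cap\ker(\pi_i)$ with the stated order. This matters because your two proposed fallbacks would not save you: proving only the case $A=D$ is insufficient for (iv), where you must take one factor equal to $E^{(n)}$ and the other equal to the ideal, and the symmetrized containment $\gamma_i(A)\gamma_i(D)\subseteq\gamma_i(AD+DA)$ would likewise fail to show that a one-sided ideal is preserved. So you should commit to the correctly ordered statement, which your final ``reduces to'' sentence in fact already contains. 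Two minor slips: a subalgebra satisfies $DD\subseteq D$, not $DD=D$, and the paper's subalgebras are not assumed unital, so the appeal to $1\in D$ in (ii) should be dropped (it is not needed).

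For (v) you genuinely diverge from the paper. The paper argues directly: a general pair of elements of $\gamma_i(D)$ is $\pi_i(a)+a'$, $\pi_i(b)+b'$ with $a,b\in D$, $a',b'\in\ker(\pi_i\vert_D)$, and then $(\pi_i(a)+a')(\pi_i(b)+b')=\pi_i(ab)+a'b+ab'+a'b'$, which is symmetric in the two factors by commutativity of $D$. Your route instead passes to $D+E_{\overline{0}}$, which is a set of pairwise commuting elements splitting as $E_{\overline{0}}\oplus K$ with $K\subseteq E_{\overline{1}}$ square zero, applies (iii) to $K$ and the degree-preservation of $\gamma_i$ (Proposition~\ref{prop:gamma-grading}), and uses monotonicity \eqref{eq:gamma-subspace} to conclude that all elements of $\gamma_i(D)\subseteq E_{\overline{0}}\oplus\gamma_i(K)$ commute. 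This works, but note that $D+E_{\overline{0}}$ need not be a subalgebra, so you must phrase the intermediate step in terms of pairwise commuting subspaces rather than commutative subalgebras; the paper's one-line computation avoids this detour and is the cleaner of the two.
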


\begin{proof} To simplify notation set $\gamma:=\gamma_i$ and $\pi:=\pi_i$. 

(i) Note that $\ker(\pi)^2=\{0\}$ implies that for any $x\in E^{(n)}$ and $y\in\ker(\pi)$ we have 
\begin{equation}\label{eq:pi(x)y} 
xy=\pi(x)y\mbox{ and }yx=y\pi(x).\end{equation}
It follows that $\pi(A)\ker(\pi\vert_D)=A\ker(\pi\vert_D)\subseteq \ker(\pi\vert_{AD})$ and similarly 
$\ker(\pi\vert_A)\pi(D)\subseteq \ker(\pi\vert_{AD})$. Taking into account that $\pi(A)\pi(D)=\pi(AD)$ we conclude
\[\gamma(A)\gamma(D)=(\ker(\pi\vert_A)+\mathrm{im}(\pi\vert_A))(\ker(\pi\vert_D)+\mathrm{im}(\pi\vert_D))
\subseteq \ker(\pi)^2+\ker(\pi\vert_{AD})+\pi(A)\pi(D)=\gamma(AD).\] 

Statements (ii), (iii), (iv) are immediate corollaries of (i). 

(v) A general pair of elements in $\gamma(D)$ can be written as $\pi(a)+a'$ and $\pi(b)+b'$ where $a,b\in D$ and $a',b'\in\ker(\pi\vert_D)$. 
From \eqref{eq:pi(x)y}, the commutativity of $D$, and that $\pi$ is an algebra homomorphism it follows that 
\[(\pi(a)+a')(\pi(b)+b')=\pi(ab)+a'b+ab'+a'b'=\pi(ba)+ba'+b'a+b'a'=(\pi(b)+b')(\pi(a)+a').\]
\end{proof} 

\begin{remark}{\rm Note that in the situation of Proposition~\ref{prop:gammaproperties} (ii) the algebra $\gamma_i(D)$ is not necessarily isomorphic to the algebra $D$ (see Example \ref{ex:2} and \ref{ex:6}).}
\end{remark} 

Combining Lemma~\ref{lemma:decomposition}, Proposition~\ref{prop:gammaproperties} and the fact that $E^{(n)}_J=\mathbb{F}v_J$ 
we obtain the following: 

\begin{theorem}\label{thm:monomialsubalg} 
Let $D\subseteq E^{(n)}$ be a subalgebra (not necessarily unitary), and set $A:=\gamma_1\dots\gamma_n(D)$. 
Then $A$ is a subalgebra of $E^{(n)}$ spanned as an $\mathbb{F}$-vector space by elements of the form $v_J$, $J\subseteq [n]$, and 
$\dim(A)=\dim(D)$. Moreover, if $D$ is commutative then $A$ is commutative, and if $D^2=\{0\}$ then $A^2=\{0\}$.  
\end{theorem}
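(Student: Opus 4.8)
The plan is to derive Theorem~\ref{thm:monomialsubalg} essentially for free by assembling the three ingredients already in hand: Lemma~\ref{lemma:decomposition}, Proposition~\ref{prop:gammaproperties}, and the identification $E^{(n)}_J=W_J=\mathbb{F}v_J$. First I would apply Proposition~\ref{prop:gammaproperties} iteratively. Since $D$ is a subalgebra, part (ii) gives that $\gamma_n(D)$ is a subalgebra, and then $\gamma_{n-1}(\gamma_n(D))$ is a subalgebra, and so on; after $n$ steps $A=\gamma_1\cdots\gamma_n(D)$ is a subalgebra of $E^{(n)}$. The same inductive bootstrapping using part (v) shows that if $D$ is commutative then so is each successive image, hence $A$ is commutative; and using part (iii), if $D^2=\{0\}$ then $A^2=\{0\}$. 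Next, $\dim(A)=\dim(D)$ is precisely Lemma~\ref{lemma:decomposition}(i) with $W=E^{(n)}$ and $\pi_i$ the projections defined at the start of this section.

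The only point requiring a small extra argument is the claim that $A$ is spanned by monomials $v_J$. For this I would invoke Lemma~\ref{lemma:decomposition}(ii), which gives the internal direct sum decomposition $A=\bigoplus_{J\subseteq[n]}(A\cap W_J)$. Now in the present setting we observed that $W_J=E^{(n)}_J=\mathbb{F}v_J$ is one-dimensional. Hence each summand $A\cap W_J$ is either $\{0\}$ or all of $\mathbb{F}v_J$, so $A$ is the span of the set $\{v_J : A\cap W_J\neq\{0\}\}$, i.e.\ $A$ is spanned by monomials. (As a sanity check, the number of such $J$ equals $\dim(A)=\dim(D)$, consistent with part~(i).)

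The main obstacle, such as it is, is bookkeeping the iteration correctly: one must check that the hypotheses of Proposition~\ref{prop:gammaproperties} propagate through all $n$ applications of the $\gamma_i$'s. This is genuinely routine because each relevant property---being a subalgebra, being commutative, squaring to zero---is exactly of the form ``$\gamma_i$ preserves it,'' so a trivial induction on the number of maps applied suffices; there is no interaction to worry about, and in particular one does not need the $\gamma_i$ to commute (they do not, as remarked). The one subtlety worth stating explicitly is that Lemma~\ref{lemma:decomposition} is applied to the subspace $D$ (its subalgebra structure is irrelevant for that lemma), while Proposition~\ref{prop:gammaproperties} is what upgrades the conclusion from a subspace statement to a subalgebra statement; the two are then glued by the one-dimensionality of the $W_J$. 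I expect the whole proof to be three or four sentences.
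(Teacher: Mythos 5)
Your proposal is correct and is exactly the paper's argument: the authors state the theorem as an immediate consequence of Lemma~\ref{lemma:decomposition}, Proposition~\ref{prop:gammaproperties}, and the identity $W_J=E^{(n)}_J=\mathbb{F}v_J$, which is precisely the assembly you carry out (iterating parts (ii), (iii), (v) for the algebraic properties, part (i) of the lemma for the dimension, and part (ii) of the lemma plus one-dimensionality of $W_J$ for the monomial spanning set). Nothing is missing.
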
 

\begin{remark}\label{remark:other-gamma} 
{\rm The role of the generators $v_1,\dots,v_n$ is symmetric, so the conclusion of Theorem~\ref{thm:monomialsubalg} holds for $A=\gamma_{\sigma(1)}\dots\gamma_{\sigma(n)}(D)$ where $\sigma$ is an arbitrary permutation of $1,\dots,n$. However, different permutations $\sigma$ yield in general different subspaces $\gamma_{\sigma(1)}\dots\gamma_{\sigma(n)}(D)$, see Example~\ref{ex:4}.} 
\end{remark} 

The projections $\pi_i$ preserve the degree, hence the maps $\gamma_i$ are also compatible with the grading on $E$: 

\begin{proposition}\label{prop:gamma-grading} 
\begin{itemize}
\item[(i)] If $D\subseteq \bigoplus_{k\in I}E_k$ for some $I\subseteq [n]$, then $\gamma_i(D)\subseteq \bigoplus_{k\in I}E_k$. 
\item[(ii)] If $D\subseteq \bigoplus_{k\in I}E_k$ and $A\subseteq \bigoplus_{k\in J}E_k$ where  $I,J\subseteq[n]$ are disjoint subsets then $\gamma_i(A\oplus D)=\gamma_i(A)\oplus\gamma_i(D)$. 
\item[(iii)] If $D=\bigoplus_{k=0}^n(D\cap E_k)$ is spanned by its homogeneous components, then we have 
$\gamma_i(D)=\bigoplus_{k=0}^n\gamma_i(D\cap E_k)$ (and $\gamma_i(D\cap E_k)\subseteq E_k$ for all $k$). 
\end{itemize}
\end{proposition}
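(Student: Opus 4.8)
The plan is to prove each part by reducing to the action of a single projection $\pi_i$ and exploiting that $\pi_i$ is both degree-preserving and an algebra homomorphism with $\ker(\pi_i)=v_iE^{(n)}$ a homogeneous ideal. First I would establish (i): if $D\subseteq\bigoplus_{k\in I}E_k$, then because $\pi_i$ preserves degree, $\pi_i\vert_D$ has image in $\bigoplus_{k\in I}E_k$ and $\ker(\pi_i\vert_D)$ is a subspace of $D$, hence also lies in $\bigoplus_{k\in I}E_k$; therefore $\gamma_i(D)=\ker(\pi_i\vert_D)\oplus\mathrm{im}(\pi_i\vert_D)\subseteq\bigoplus_{k\in I}E_k$. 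This is essentially immediate.

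For (ii), the key point is that $\pi_i$ preserves the finer decomposition $E^{(n)}=(\bigoplus_{k\in I}E_k)\oplus(\bigoplus_{k\in J}E_k)\oplus(\text{rest})$ since $I$ and $J$ are disjoint. So if $x=a+d$ with $a\in A\subseteq\bigoplus_{k\in J}E_k$ and $d\in D\subseteq\bigoplus_{k\in I}E_k$, then $\pi_i(x)=\pi_i(a)+\pi_i(d)$ with the two summands in the respective (disjoint, hence independent) graded pieces, and likewise $x\in\ker(\pi_i\vert_{A\oplus D})$ iff $\pi_i(a)=0$ and $\pi_i(d)=0$ iff $a\in\ker(\pi_i\vert_A)$ and $d\in\ker(\pi_i\vert_D)$. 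This gives $\ker(\pi_i\vert_{A\oplus D})=\ker(\pi_i\vert_A)\oplus\ker(\pi_i\vert_D)$ and $\mathrm{im}(\pi_i\vert_{A\oplus D})=\mathrm{im}(\pi_i\vert_A)\oplus\mathrm{im}(\pi_i\vert_D)$, and all four pieces sit in pairwise-disjoint graded components (using $I\cap J=\emptyset$), so assembling them yields $\gamma_i(A\oplus D)=\gamma_i(A)\oplus\gamma_i(D)$ as an internal direct sum. The mild subtlety here is to check that the four subspaces $\ker(\pi_i\vert_A),\mathrm{im}(\pi_i\vert_A),\ker(\pi_i\vert_D),\mathrm{im}(\pi_i\vert_D)$ really are independent: $\ker$ and $\mathrm{im}$ of a projection are always complementary, and the $A$-pieces live in $\bigoplus_{k\in J}E_k$ while the $D$-pieces live in $\bigoplus_{k\in I}E_k$ with $I\cap J=\emptyset$, so independence follows.

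For (iii), I would apply (ii) inductively. Writing $D=\bigoplus_{k=0}^n D_k$ with $D_k:=D\cap E_k$, the components $D_k\subseteq E_k$ sit in the pairwise-disjoint graded pieces $E_0,\dots,E_n$, so a straightforward induction on the number of nonzero components, using (ii) at each step to peel off one component, gives $\gamma_i(D)=\bigoplus_{k=0}^n\gamma_i(D_k)$; and by (i) applied to $I=\{k\}$ we have $\gamma_i(D_k)\subseteq E_k$, which also identifies $\gamma_i(D_k)=\gamma_i(D)\cap E_k$. The main (very minor) obstacle throughout is purely bookkeeping: making sure the direct-sum claims are internal direct sums inside $E^{(n)}$ and that disjointness of index sets is invoked precisely where needed to guarantee independence; there is no real difficulty, since everything is forced by $\pi_i$ being a homogeneous idempotent.
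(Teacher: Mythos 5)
Your proof is correct and is exactly the elaboration the paper intends: the authors state Proposition~\ref{prop:gamma-grading} without proof, remarking only that the $\pi_i$ preserve degree, and your argument fills in precisely that reasoning (degree preservation gives (i), independence of disjoint graded pieces gives the kernel/image splittings in (ii), and induction on components gives (iii)). No gaps.
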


Write $b^{\min}$ for the minimal degree non-zero homogeneous component of a non-zero $b\in E$. 
To any subspace $A$ of $E$ one can canonically associate a subspace $A^{\min}$ spanned by homogeneous elements as follows: 
\[A^{\min}:=\mathrm{Span}_{\mathbb{F}}\{a\in E\mid a=b^{\min}  \mbox{ for some non-zero }b\in A\}.\] 
For later use we mention the following straightforward statement: 

\begin{proposition}\label{prop:a^min} 
\begin{itemize} 
\item[(i)] $\dim(A^{\min})=\dim(A)$; 
\item[(ii)] If $A$ is a subalgebra of $E$ then $A^{\min}$ is a subalgebra of $E$. 
Moreover, if $A$ is commutative then $A^{\min}$ is commutative, and if $A$ is a square zero subspace then $A^{\min}$ is a square zero subspace. 
\end{itemize}
\end{proposition}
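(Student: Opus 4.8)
The plan is to prove the two parts of Proposition~\ref{prop:a^min} by reducing everything to the behaviour of the ``minimal component'' operation $b\mapsto b^{\min}$ under multiplication. First I would record the basic multiplicative fact: for non-zero $a,b\in E$ with $ab\neq 0$, the lowest-degree term of the product $ab$ is exactly $a^{\min}b^{\min}$ whenever $a^{\min}b^{\min}\neq 0$; more precisely, $\deg(ab)\ge \deg(a^{\min})+\deg(b^{\min})$ always, and if $a^{\min}b^{\min}\neq 0$ then $(ab)^{\min}=a^{\min}b^{\min}$. This is immediate from the $\mathbb{Z}$-grading: writing $a=a^{\min}+(\text{higher})$, $b=b^{\min}+(\text{higher})$, the only term of $ab$ in degree $\deg(a^{\min})+\deg(b^{\min})$ is $a^{\min}b^{\min}$.

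For part (i), the statement $\dim(A^{\min})=\dim(A)$ is a standard associated-graded argument. I would filter $A$ by degree: let $A_{\ge k}:=\{b\in A\mid b=0 \text{ or } \deg(b^{\min})\ge k\}$, a descending chain of subspaces of $A$ with $A_{\ge 0}=A$ and $A_{\ge n+1}=0$. The map sending $b\in A_{\ge k}$ to the degree-$k$ homogeneous component of $b$ induces an injective linear map $A_{\ge k}/A_{\ge k+1}\hookrightarrow E_k$ whose image is, by definition, the degree-$k$ part of $A^{\min}$ (that $b^{\min}$ has degree exactly $k$ for $b\in A_{\ge k}\setminus A_{\ge k+1}$ is what makes the image land in the spanning set of $A^{\min}$, and conversely each generator $a=b^{\min}$ of $A^{\min}$ arises this way). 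Hence $\dim(A^{\min})=\sum_k \dim(A_{\ge k}/A_{\ge k+1})=\dim(A)$.

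For part (ii), suppose $A$ is a subalgebra. I must show $A^{\min}$ is closed under multiplication, so it suffices to check products of the spanning elements: take non-zero $b,c\in A$ and show $b^{\min}c^{\min}\in A^{\min}$. If $b^{\min}c^{\min}=0$ there is nothing to prove; otherwise $bc\in A$ is non-zero and by the multiplicative fact $(bc)^{\min}=b^{\min}c^{\min}$, so $b^{\min}c^{\min}\in A^{\min}$ by definition. This shows $A^{\min}$ is a subalgebra. The commutativity claim is now easy: if $A$ is commutative then for spanning elements $b^{\min},c^{\min}$, either one of the products $b^{\min}c^{\min}$, $c^{\min}b^{\min}$ is zero — but since in $E$ any two homogeneous elements commute or anticommute, $b^{\min}c^{\min}=0\iff c^{\min}b^{\min}=0$, so both vanish and they trivially commute — or both are non-zero, in which case $b^{\min}c^{\min}=(bc)^{\min}=(cb)^{\min}=c^{\min}b^{\min}$ using $bc=cb$. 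The square-zero claim is the special case: if $A^2=0$ then for $b,c\in A$ we have $bc=0$, hence $b^{\min}c^{\min}$ cannot be the lowest term of a non-zero product, forcing $b^{\min}c^{\min}=0$; since these span $A^{\min}$, $(A^{\min})^2=0$.

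The only genuinely delicate point — the ``main obstacle'', though it is minor — is being careful that in part (ii) it genuinely suffices to verify the algebra/commutator/square-zero condition on the spanning set $\{b^{\min}\mid b\in A\}$ rather than on all of $A^{\min}$; this is fine because multiplication is bilinear, so closure (respectively, commutativity, respectively, vanishing of all products) on a spanning set propagates to the whole span. Everything else is a routine consequence of the single grading fact $(ab)^{\min}=a^{\min}b^{\min}$ when the right-hand side is non-zero, together with the degree inequality $\deg(ab)\ge \deg(a^{\min})+\deg(b^{\min})$.
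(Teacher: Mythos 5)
Your proof is correct and is exactly the routine argument the paper has in mind (it states Proposition~\ref{prop:a^min} without proof, calling it straightforward): the filtration $A_{\ge k}$ by minimal degree gives (i), and the single grading fact that $(ab)^{\min}=a^{\min}b^{\min}$ whenever the latter is non-zero, together with the observation that homogeneous elements of $E$ commute or anticommute, gives (ii). No gaps.
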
 

Note also that for any graded subalgebra $A$ of $E$, the subalgebra $B:=\gamma_1\dots\gamma_n(A)$ spanned by monomials has the same Hilbert series as $A$: we have 
$\dim(A\cap E_k)=\dim(B\cap E_k)$  for $k=0,1,\dots,n$ by Proposition~\ref{prop:gamma-grading}. 

\begin{remark} \label{remark:notallgraded} {\rm Not all subalgebras of $E^{(n)}$ are isomorphic to a graded subalgebra of $E^{(n)}$, and not all graded subalgebras of $E$ are isomorphic to  a subalgebra generated by monomials, see Example~\ref{example:nongraded} and \ref{example:nonmonomial}.  }
\end{remark} 


\section{Initial monomials}\label{sec:groebner} 

The end result of the construction in Theorem~\ref{thm:monomialsubalg} can be interpreted in terms of initial monomials. 
Given a total order $>$ on the set $\{v_I\mid I\subseteq [n]\}$ of monomials, for a non-zero $x\in E$ as in \eqref{eq:x_J} set  $\mathrm{inm}_>(x):=v_I$, where $v_I$ is the largest with respect to $>$ among the monomials $v_J$ with $x_J\neq 0$. The {\it initial term} of $x$ is $\mathrm{int}_>(x):=x_Iv_I$ where $v_I=\mathrm{inm}_>(x)$. 
We define $\mathrm{int}_>(0):=0$. 
For a subspace $A$ in $E$ set $\mathrm{in}_>(A):=\mathrm{Span}_{\mathbb{F}}\{\mathrm{inm}_>(x)\mid x\in A\}$. 
It is straightforward that  $\dim(A)=\dim(\mathrm{in}_>(A))$ (see for example Proposition 1.1 in \cite{aramova-herzog-hibi}). 
Suppose that the monomial order $>$ satisfies the following: 
\begin{equation}\label{eq:order-condition}
\mbox{If } \quad \mathrm{inm}_>(x)\mathrm{inm}_>(y)\neq 0 \quad \mbox{ then }\quad \mathrm{int}_>(xy)=\mathrm{int}_>(x)\mathrm{int}_>(y).
\end{equation} 
Condition \eqref{eq:order-condition} holds for example for the monomial orders considered in \cite{aramova-herzog-hibi}, where the basics of Gr\"obner basis theory in the Grasmann algebra are worked out. As we shall see below, there are other interesting monomial orders that satisfy \eqref{eq:order-condition}, but do not fit into the framework of 
\cite{aramova-herzog-hibi}. 
Note that \eqref{eq:order-condition} implies that for  $x,y\in E$ either we have 
$\mathrm{int}_>(x)\mathrm{int}_>(y)=0=\mathrm{int}_>(y)\mathrm{int}_>(x)$ 
or otherwise  $\mathrm{int}_>(x)\mathrm{int}_>(y)=\mathrm{int}_>(xy)$ and 
$\mathrm{int}_>(y)\mathrm{int}_>(x)=\mathrm{int}_>(yx)$. 
This yields the following: 

\begin{proposition}\label{prop:insubalg} If  $A$ is a subalgebra in $E$ and $>$ satisfies \eqref{eq:order-condition}, then $\mathrm{in}_>(A)$ is an equidimensional subalgebra as well.  In addition, if  $A$ is commutative, then $\mathrm{in}_>(A)$ is commutative, and if 
$A$ is a square zero subspace, then $\mathrm{in}_>(A)$ is a square zero subspace.  
\end{proposition}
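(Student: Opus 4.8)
The plan is to verify each of the three assertions (subalgebra, commutativity, square zero) by a direct argument using the fact that passing to initial monomials is dimension-preserving together with the multiplicativity property \eqref{eq:order-condition}. First I would recall that $\dim(\mathrm{in}_>(A)) = \dim(A)$ is known (cited from \cite{aramova-herzog-hibi}), so the only content is closure under multiplication and the inheritance of the commutativity / square-zero properties; ``equidimensional'' is then immediate.

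To show $\mathrm{in}_>(A)$ is a subalgebra, I would take two spanning elements $\mathrm{inm}_>(x)$ and $\mathrm{inm}_>(y)$ with $x,y \in A$ and show their product lies in $\mathrm{in}_>(A)$. If $\mathrm{inm}_>(x)\mathrm{inm}_>(y) = 0$ there is nothing to prove. Otherwise, by \eqref{eq:order-condition} we have $\mathrm{int}_>(xy) = \mathrm{int}_>(x)\mathrm{int}_>(y)$, which is a nonzero scalar multiple of $\mathrm{inm}_>(x)\mathrm{inm}_>(y)$; since $xy \in A$ (as $A$ is a subalgebra) and $xy \neq 0$ (its initial term is nonzero), we get $\mathrm{inm}_>(xy) = \mathrm{inm}_>(x)\mathrm{inm}_>(y) \in \mathrm{in}_>(A)$. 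Extending bilinearly over $\mathbb{F}$-linear combinations of basis elements then shows $\mathrm{in}_>(A)\cdot\mathrm{in}_>(A) \subseteq \mathrm{in}_>(A)$ — one should note here that products of general elements of $\mathrm{in}_>(A)$ are $\mathbb{F}$-spans of products of spanning monomials, and each such product is either $0$ or a spanning monomial of $\mathrm{in}_>(A)$, so the span is contained in $\mathrm{in}_>(A)$.

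For the commutativity and square-zero claims I would use the refinement of \eqref{eq:order-condition} already spelled out in the paragraph preceding the proposition: for any $x,y \in E$, either $\mathrm{int}_>(x)\mathrm{int}_>(y) = 0 = \mathrm{int}_>(y)\mathrm{int}_>(x)$, or else $\mathrm{int}_>(x)\mathrm{int}_>(y) = \mathrm{int}_>(xy)$ and $\mathrm{int}_>(y)\mathrm{int}_>(x) = \mathrm{int}_>(yx)$. If $A$ is commutative, then for $x,y \in A$ we have $xy = yx$, hence $\mathrm{int}_>(xy) = \mathrm{int}_>(yx)$, so in the second case $\mathrm{inm}_>(x)\mathrm{inm}_>(y) = \mathrm{inm}_>(y)\mathrm{inm}_>(x)$ up to the same nonzero scalar, and in the first case both products vanish; either way the spanning monomials of $\mathrm{in}_>(A)$ commute, so $\mathrm{in}_>(A)$ is commutative. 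If instead $A^2 = \{0\}$, then $xy = 0$ for all $x,y \in A$, so $\mathrm{int}_>(xy) = 0$, which forces the first alternative, i.e. $\mathrm{inm}_>(x)\mathrm{inm}_>(y) = 0$ for all spanning monomials; bilinearity then gives $\mathrm{in}_>(A)^2 = \{0\}$.

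I do not expect a serious obstacle here: the proposition is essentially a formal consequence of \eqref{eq:order-condition} and the standard dimension equality. The only mild care needed is the bilinear bookkeeping — observing that an arbitrary element of $\mathrm{in}_>(A)$ is a linear combination of the distinguished monomials $\mathrm{inm}_>(x)$, so that statements proved for pairs of these monomials propagate to all of $\mathrm{in}_>(A)$ — and making sure that whenever $\mathrm{inm}_>(x)\mathrm{inm}_>(y) \neq 0$ we are entitled to conclude $xy \neq 0$, which holds precisely because $\mathrm{int}_>(xy) = \mathrm{int}_>(x)\mathrm{int}_>(y) \neq 0$ in that case. No new ideas beyond the already-stated consequence of \eqref{eq:order-condition} are required.
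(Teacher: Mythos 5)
Your proposal is correct and follows essentially the same route as the paper: the paper gives no separate proof, simply asserting that the proposition follows from the dichotomy stated just before it (either $\mathrm{int}_>(x)\mathrm{int}_>(y)=0=\mathrm{int}_>(y)\mathrm{int}_>(x)$, or both products equal $\mathrm{int}_>(xy)$ resp.\ $\mathrm{int}_>(yx)$), and your argument is exactly that deduction written out in full, with the bilinear bookkeeping made explicit.
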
 

The connection between Theorem~\ref{thm:monomialsubalg} and Proposition~\ref{prop:insubalg} is explained in the following statement: 

\begin{proposition}\label{prop:gamma-initial} Let $D\subseteq E^{(n)}$ be a subspace, and  set $A:=\gamma_1\dots\gamma_n(D)$. 
Then we have $A=\mathrm{in}_>(D)$, where $>$ is the monomial order defined as follows: $v_I>v_J$ for some 
$I=\{i_1>\cdots >i_r\}$ and $J:=\{j_1>\cdots > j_s\}$ if $s>r$ and $i_1=j_1,\dots,i_r=j_r$, or otherwise for some $k\le\min\{r,s\}$ we have $i_1=j_1,\dots,i_{k-1}=j_{k-1}$, 
and $i_k<j_k$. 
\end{proposition}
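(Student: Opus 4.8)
The plan is to prove the identity $A = \mathrm{in}_>(D)$ by first analyzing what a single $\gamma_i$ does in terms of the order $>$, and then iterating. First I would verify that $>$ as defined is a genuine total order on monomials (it is a lexicographic-type comparison of the decreasing sequences of indices, with the shorter sequence winning when it is a prefix), and check that it satisfies the condition \eqref{eq:order-condition}; for the Grassmann product $v_Iv_J$ is $\pm v_{I\cup J}$ when $I\cap J=\emptyset$ and $0$ otherwise, so this amounts to checking that $\mathrm{inm}_>$ of a product equals the (disjoint) union of the two initial monomials whenever that union is a legal monomial, which follows from the definition of $>$ by a direct combinatorial argument on the sorted index sequences. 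Having this, Proposition~\ref{prop:insubalg} applies, so $\mathrm{in}_>(D)$ is automatically a subalgebra with the right dimension and the right commutativity/square-zero properties; but the point of this proposition is the sharper claim that $\mathrm{in}_>(D)$ coincides with the specific subspace $\gamma_1\cdots\gamma_n(D)$.

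The core of the argument is a lemma comparing $\gamma_n(D)$ with the passage to initial monomials with respect to the \emph{first} coordinate of the index sequences, i.e. with respect to whether $n\in J$ or not. Recall $\pi_n$ kills the part of each element lying in $v_nE = \ker(\pi_n)$, and $n$ is the largest element of $[n]$, so a monomial $v_J$ with $n\notin J$ is $>$ every monomial $v_K$ with $n\in K$ that has the same... no: precisely, under $>$, among monomials, those \emph{not} containing $n$ all precede (are smaller than) none automatically, but the decisive point is that for a fixed element $x$, writing $x = \pi_n(x) + r$ with $r\in v_nE$, the initial monomial of $x$ equals $\mathrm{inm}_>(\pi_n(x))$ whenever $\pi_n(x)\neq 0$, and equals $\mathrm{inm}_>(r)$ otherwise — because the first index $i_1$ of $\mathrm{inm}_>(\pi_n(x))$ is $<n$ while the first index of any monomial in $r$ is $n$, and $i_1 < n$ makes $v_I$ larger in this order. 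Hmm, wait: I need $v_I > v_K$ when $i_1 < k_1$, which is exactly the second clause of the definition, so a monomial whose largest index is $<n$ beats any monomial whose largest index is $n$. Therefore $\mathrm{in}_>(D) = \mathrm{in}_>(\pi_n(D)) \oplus \mathrm{in}_>(\ker(\pi_n\vert_D))$, while $\gamma_n(D) = \mathrm{im}(\pi_n\vert_D) \oplus \ker(\pi_n\vert_D) = \pi_n(D) \oplus \ker(\pi_n\vert_D)$; so taking $\mathrm{in}_>$ of both sides of the $\gamma_n$ decomposition, $\mathrm{in}_>(D) = \mathrm{in}_>(\gamma_n(D))$, and moreover $\gamma_n(D)$ splits as a direct sum of a subspace of $E' := \mathrm{Span}\{v_J : n\notin J\}$ and a subspace of $v_nE' \cong E'$.

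The induction then runs as follows: on $E' \cong E^{(n-1)}$ the remaining projections $\pi_1,\dots,\pi_{n-1}$ and maps $\gamma_1,\dots,\gamma_{n-1}$ restrict correctly (as in the proof of Lemma~\ref{lemma:decomposition}), and the restriction of $>$ to monomials in $E'$ is exactly the analogously-defined order on $E^{(n-1)}$; by \eqref{eq:W'W''}-type splitting, $\gamma_1\cdots\gamma_{n-1}$ acts on each of the two pieces of $\gamma_n(D)$ separately, and by induction its effect on each piece is to pass to the initial subspace. Combining with $\mathrm{in}_>(D) = \mathrm{in}_>(\gamma_n(D))$ and the fact that $\mathrm{in}_>$ commutes with the $E' \oplus v_nE'$ decomposition, I get $A = \gamma_1\cdots\gamma_n(D) = \mathrm{in}_>(\gamma_n(D)) = \mathrm{in}_>(D)$. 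I expect the main obstacle to be the bookkeeping in this last step: making precise that ``$\mathrm{in}_>$ applied after $\gamma_n$ on each of the two summands, using the induced order on $E'$, agrees with $\mathrm{in}_>$ applied to $D$ directly'' — i.e. that taking initial monomials is compatible both with the $\ker/\mathrm{im}$ splitting at stage $n$ and with the restriction of the order to the sub-Grassmann-algebra. This is conceptually clear from the prefix structure of $>$ but requires care to state cleanly; verifying \eqref{eq:order-condition} for $>$ is routine but should not be skipped since it is what licenses the use of Proposition~\ref{prop:insubalg}.
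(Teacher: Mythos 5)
Your proposal is correct, and it rests on the same combinatorial observation that drives the paper's proof: under $>$, once membership of the indices $n, n-1,\dots$ down to some level agrees, a monomial omitting the next index beats every monomial containing it, which is exactly the priority that applying $\gamma_n$, then $\gamma_{n-1}$, \dots, then $\gamma_1$ implements. The organization, however, is genuinely different. The paper argues element-wise: for each nonzero $x\in D$ it builds a chain $x=x^{(0)},x^{(1)},\dots,x^{(n)}$, where at stage $k$ the terms involving $v_{n-k+1}$ are discarded unless every term involves it, checks that $\mathrm{inm}_>(x^{(k)})=\mathrm{inm}_>(x)$ throughout, uses $\gamma_{n-k+1}(\mathbb{F}x^{(k-1)})=\mathbb{F}x^{(k)}$ together with \eqref{eq:gamma-subspace} and Lemma~\ref{lemma:decomposition} to conclude $\mathrm{inm}_>(x)\in\mathbb{F}x^{(n)}\subseteq A$, and then finishes with the dimension count $\dim A=\dim D=\dim\mathrm{in}_>(D)$. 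You instead work at the level of subspaces: the identity $\mathrm{in}_>(D)=\mathrm{in}_>(\pi_n(D))\oplus\mathrm{in}_>(\ker(\pi_n\vert_D))=\mathrm{in}_>(\gamma_n(D))$ (which is correct, by exactly the comparison of monomials with and without $v_n$ that you state), followed by induction on $n$ via the splitting \eqref{eq:W'W''} and the compatibility of $>$ with the identifications $E'\cong E^{(n-1)}$ and $v_nE'\cong E'$ (the latter is harmless because $\pi_j(v_nu)=v_n\pi_j(u)$ for $j<n$, so $\gamma_j(v_nC)=v_n\gamma_j(C)$). This buys you the equality $A=\mathrm{in}_>(D)$ directly, with no final dimension count, at the price of the bookkeeping you yourself flag; the paper's version avoids that bookkeeping but needs $\dim(\mathrm{in}_>(D))=\dim(D)$ to upgrade an inclusion to an equality. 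One simplification you can make: verifying \eqref{eq:order-condition} for this order is not needed for the proposition (the paper does not verify it in this proof either; the algebraic properties of $A$ already follow from Theorem~\ref{thm:monomialsubalg}), so you may drop that preliminary step, though it is true and is what justifies the paper's remark that this order also falls under Proposition~\ref{prop:insubalg}.
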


\begin{proof}  Take any non-zero $x\in D$, set $x^{(0)}:=x$, and for $k=1,\dots,n$ define $x^{(k)}$ recursively as follows: if $v_{n-k+1}$ is involved in each non-zero term  
of  $x^{(k-1)}$, then $x^{(k)}:=x^{(k-1)}$, otherwise $x^{(k)}$ is obtained by removing all terms of $x^{(k-1)}$ that involve $v_{n-k+1}$. It is clear from the definition of 
$\gamma_j$ that $\gamma_{n-k+1}(\mathbb{F}x^{(k-1)})=\mathbb{F}x^{(k)}$, hence by \eqref{eq:gamma-subspace} we conclude $x^{(n)}\in A$. On the other hand, 
by induction on $k$ we show that $\mathrm{inm}_>(x^{(k)})=\mathrm{inm}_>(x)$ for all $k$. This is trivial for $k=0$; assume that $k>0$, and $\mathrm{inm}_>(x^{(k-1)})=\mathrm{inm}_>(x)$. For each $j\in\{n-k+2,\dots,n\}$ we have that either $v_j$ is involved in each term of $x^{(k-1)}$, or $v_j$ is not involved in any non-zero term of $x^{(k-1)}$.  It follows by definition of $>$ that if $v_{n-k+1}$ is involved in the initial term of $x^{(k-1)}$, then it is involved in all its terms, hence $x^{(k-1)}=x^{(k)}$, or $v_{n-k+1}$ is not involved 
in the initial term of $x^{(k-1)}$, hence the initial term of $x^{(k-1)}$  survives in $x^{(k)}$. In either case we get $\mathrm{inm}_>(x^{(k)})=\mathrm{inm}_>(x^{(k-1)})$.   
Note finally that  by Lemma~\ref{lemma:decomposition} $x^{(n)}$ is a non-zero scalar multiple of a monomial, hence 
$\mathrm{inm}_>(x)\in \mathbb{F} x^{(n)}\subseteq A$. 
This holds for all non-zero $x\in D$, thus $A\subseteq \mathrm{in}_>(D)$. Here we necessarily have equality, as both spaces have the same dimension as $D$.   
\end{proof}


\section{Odd intersecting systems}\label{sec:intersecting}

Theorem~\ref{thm:monomialsubalg} or Proposition~\ref{prop:insubalg} open the way to reduce certain questions on square zero subspaces of $E_{\overline{1}}$ to questions about odd intersecting systems. 
Recall that a set $\mathcal{F}\subseteq 2^{[n]}$ of subsets of $[n]$ is called an {\it intersecting system} if $A\cap B\ne \emptyset$ for  any $A,B\in \mathcal{F}$, and 
it is an {\it odd intersecting sytem} if in addition we have that $|A|$ is odd for all $A\in\mathcal{F}$.

\begin{proposition}\label{prop:oddintersecting}
Let $\mathcal{F}\subseteq 2^{[n]}$ be an odd intersecting system.  
\begin{itemize}
\item [(i)] If $n$ is even then $|\mathcal{F}|\leq 2^{n-2}$. 
\item [(ii)] If $n$ is odd, $\mathcal{F}\subseteq \binom{[n]}{i}\cup\binom{[n]}{n-i-1}$ for some odd $i$ with $i<n/2-1$ and $\mathcal{F}$ is of maximal possible size,  
then $\mathcal{F}=\binom{[n]}{n-i-1}$. 
\item [(iii)] If $n=4k+1$ (where $k$ is a non-negative integer) and $|\mathcal{F}|$ is  maximal then $\mathcal{F}=\bigcup_{n/2<i \textrm{ odd}}\binom{[n]}{i}$.
\item [(iv)] If $n=4k+3$ (where $k$ is a non-negative integer) and $|\mathcal{F}|$ is maximal then $\mathcal{F}=\bigcup_{n/2<i \textrm{ odd}}\binom{[n]}{i}\cup \mathcal{G}$, where there exists an $l\in [n]$ such that $\mathcal{G}=\{X\in \binom{[n]}{2k+1}: l\in X\}$
\end{itemize}
\end{proposition}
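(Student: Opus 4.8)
The plan is to treat the four parts with a common engine: the classical Erd\H{o}s--Ko--Rado theorem together with the Hilton--Milner refinement, reorganized by complementation. The key observation is that if $n$ is odd and $|A|$ is odd, then $|[n]\setminus A|=n-|A|$ is even, so it is often more convenient to work on the complementary side, where sets have even size $n-i$. I would first establish part (i) as a warm-up: for $n$ even, write $[n]$ as a disjoint union of two-element blocks, or more directly, use that the number of odd-size subsets of $[n]$ is $2^{n-1}$, and any two complementary sets $A$, $[n]\setminus A$ have even intersection (either empty or of size $\ge 2$); since $n$ is even, $A$ odd forces $[n]\setminus A$ odd, so $A$ and $[n]\setminus A$ cannot both lie in an odd intersecting system unless they meet, which a parity count rules out for exactly one of each complementary pair, giving $|\mathcal F|\le\frac12\cdot 2^{n-1}=2^{n-2}$.

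For part (ii), the family $\mathcal F$ lives in two levels $\binom{[n]}{i}\cup\binom{[n]}{n-i-1}$ with $i$ odd and $i<n/2-1$, so the larger level is $\binom{[n]}{n-i-1}$ (since $n-i-1>i$ forces $\binom{[n]}{n-i-1}>\binom{[n]}{i}$ by $n-i-1<n/2$ failing—careful: $n-i-1$ may exceed $n/2$, so I must check $\binom{n}{n-i-1}\ge\binom{n}{i}$ directly, which holds because $i<n-i-1$ and $i<n/2$ places both below or straddling the middle with $i$ the smaller). The whole of $\binom{[n]}{n-i-1}$ is intersecting: two sets of size $n-i-1$ fail to meet only if $2(n-i-1)\le n$, i.e. $n\le 2i+2$, contradicting $i<n/2-1$. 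So $\mathcal F=\binom{[n]}{n-i-1}$ is a candidate of size $\binom{n}{n-i-1}$. To see it is strictly optimal, I would argue that adding any set $B$ of size $i$ to a maximal intersecting family forces the removal of all $X\in\binom{[n]}{n-i-1}$ disjoint from $B$; since $\binom{n-i}{n-i-1}=n-i$ such $X$ exist and only the single set $[n]\setminus B$ of size $n-i-1$ meets every $i$-set "for free" through $B$—more precisely, a counting/shifting argument shows the net change is negative. The cleanest route is an exchange/compression argument: apply the shifting operator to push $\mathcal F$ to a shifted family, observe shifted intersecting families in these two levels with $i$ in the lower range are dominated by the top level, and shifting preserves size.

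For parts (iii) and (iv) I would collect the "free" part $\mathcal F_0:=\bigcup_{i>n/2\text{ odd}}\binom{[n]}{i}$, which is automatically intersecting (two sets each of size $>n/2$ must meet) and of size $\sum_{i>n/2\text{ odd}}\binom{n}{i}$; by symmetry of binomial coefficients and a parity bookkeeping this is exactly half of the odd-size subsets minus/plus a middle correction depending on $n\bmod 4$. When $n=4k+1$, the middle odd layer is $\binom{[n]}{2k+1}$ with $2k+1<n/2$... (recompute: $n/2=2k+1/2$, so $2k+1>n/2$), hence $\binom{[n]}{2k+1}$ is already in $\mathcal F_0$ and there is no leftover middle layer of odd size below $n/2$ adjacent to it; the first odd layer below $n/2$ is $\binom{[n]}{2k-1}$, and part (ii) (applicable since $2k-1<n/2-1$) shows one cannot profitably trade into it. So $\mathcal F=\mathcal F_0$, and uniqueness follows because any odd intersecting family is obtained from $\mathcal F_0$ by finitely many exchanges each of which, by (ii) and EKR, does not increase size, with equality only for $\mathcal F_0$. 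When $n=4k+3$, now $2k+1<n/2=2k+3/2$, so the odd layer $\binom{[n]}{2k+1}$ sits just below the middle and is \emph{not} in $\mathcal F_0$; but $2(2k+1)=4k+2<n$, so $\binom{[n]}{2k+1}$ is not intersecting as a whole. By the Erd\H{o}s--Ko--Rado theorem the largest intersecting subfamily of $\binom{[n]}{2k+1}$ has size $\binom{n-1}{2k}$ and, since $2k+1<n/2$ strictly, Hilton--Milner gives uniqueness: it must be a star $\mathcal G=\{X:l\in X\}$. One then checks $\mathcal F_0\cup\mathcal G$ is intersecting (a set of size $\ge n/2$ meets a set of size $2k+1$ containing $l$? not automatic—must verify: a set $Y$ with $|Y|\ge 2k+3$ and a set $X\ni l$ with $|X|=2k+1$ satisfy $|X|+|Y|\ge 4k+4>n$, so they meet), and that no larger odd layer can be added and no second star on a different element $l'$ can be merged without a collision, so $\mathcal G$ is unique up to the choice of $l$.

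The main obstacle I anticipate is the uniqueness/optimality bookkeeping in (iii) and (iv): showing not merely that $\mathcal F_0$ (resp. $\mathcal F_0\cup\mathcal G$) \emph{achieves} the maximum, but that every maximum-size odd intersecting system equals it. The clean tool is the shifting (compression) technique—push $\mathcal F$ to a compressed family without changing $|\mathcal F|$ or destroying the intersecting property, classify compressed odd intersecting systems layer by layer using EKR on each layer that lies below the middle, and then run the Hilton--Milner stability result to rule out non-star configurations in the critical middle layer; finally "un-shift" to recover the characterization up to relabeling, which is where the choice of $l\in[n]$ in (iv) enters. Handling the interaction between adjacent layers (a set in layer $i$ and its near-complement in layer $n-i-1$ as in (ii)) under shifting is the technically delicate point, and I would isolate it as a lemma before assembling the four cases.
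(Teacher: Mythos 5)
Your overall architecture matches the paper's: part (i) by pairing each odd-size set with its (also odd-size) complement, and parts (iii)--(iv) by splitting the odd levels into complementary pairs $(i,\,n-i-1)$, applying (ii) to each pair whose lower member sits below the middle, and invoking the Erd\H{o}s--Ko--Rado theorem for the self-paired middle level $\binom{[n]}{2k+1}$ when $n=4k+3$; your check that the star is compatible with the upper levels ($|X|+|Y|\ge 4k+4>n$) is exactly the verification needed. (Two small points: complementary sets have \emph{empty} intersection, so the "even intersection of size $\ge 2$" clause in your (i) is a red herring; and you do not need Hilton--Milner anywhere --- the uniqueness of the maximum intersecting family as a star for $2k+1<n/2$ is already the equality case of EKR, which is all the paper cites.)

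The genuine gap is part (ii), which is the engine of the whole proposition, and there you offer only a strategy. Your local exchange heuristic fails as stated: deleting a single $i$-set $B$ from $\mathcal{F}$ does not free up the $n-i$ sets of size $n-i-1$ disjoint from $B$, since each of those may still be blocked by \emph{other} $i$-sets remaining in $\mathcal{F}$; moreover the sentence claiming that "$[n]\setminus B$ meets every $i$-set for free through $B$" is confused, as $[n]\setminus B$ is disjoint from $B$. The fallback you name --- compression to a shifted family plus a classification of shifted two-level intersecting systems --- is plausible but is precisely the step you flag as technically delicate and leave unexecuted. The paper replaces all of this with a short global double count of disjoint pairs: every $A\in\mathcal{F}\cap\binom{[n]}{i}$ excludes from $\mathcal{F}$ the $n-i$ sets of size $n-i-1$ disjoint from it, while each excluded set of size $n-i-1$ is disjoint from only $i+1$ sets of size $i$; hence $|\mathcal{F}\cap\binom{[n]}{i}|\cdot(n-i)\le|\mathcal{F}^c\cap\binom{[n]}{n-i-1}|\cdot(i+1)$, and since $\frac{n-i}{i+1}>1$ for $i<n/2-1$ this yields $|\mathcal{F}|\le\binom{n}{n-i-1}$ with equality only when $\mathcal{F}\cap\binom{[n]}{i}=\emptyset$. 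Until some such quantitative comparison is actually carried out, (ii) --- and therefore the uniqueness claims in (iii) and (iv) that rest on it --- remains unproved.
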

\begin{proof}
(i) This follows from the fact that  if $n$ is even,  $X\subseteq [n]$ such that $|X|$ is odd,  then $|[n]\setminus X|$ is also odd, so $\mathcal{F}$ can not simultaneously contain 
$X$ and its complement. 

(ii)  Write $\mathcal{F}^c$ for the complement of $\mathcal{F}$ in $2^{[n]}$. 
We have the inclusion 
\begin{eqnarray*}
\{(A,B)\mid A\in\mathcal{F}\cap \binom{[n]}i, B\in \binom{[n]}{n-i-1}, A\cap B=\emptyset \} \\ 
\subseteq \{(A,B)\mid B\in \mathcal{F}^c\cap \binom{[n]}{n-i-1}, A\in\binom{[n]}i,  A\cap B=\emptyset\}.\end{eqnarray*}
It follows that 
\[|\mathcal{F}\cap \binom{[n]}i|(n-i)\le |\mathcal{F}^c\cap \binom{[n]}{n-i-1}|(i+1)\] 
and hence 
$|\mathcal{F}\cap \binom{[n]}i|\frac{n-i}{i+1}+|\mathcal{F}\cap \binom{[n]}{n-i-1}|\le \binom{n}{n-i-1}$. 
Since $\frac{n-i}{i+1}>1$, we get $|\mathcal{F}|\le \binom{n}{n-i-1}$ with equality only if 
$\mathcal{F}\subseteq \binom{[n]}{n-i-1}$. 
Note that since $n-i-1>n/2$, $\binom{n}{n-i-1}$ is an intersecting system.

(iii) follows from (ii).

(iv) follows from  (ii) and the  Erd\H os-Ko-Rado Theorem \cite{erdos-ko-rado}. 
\end{proof}


\section{Commutative subalgebras of maximal dimension}

\begin{theorem}\label{thm:maxcommsubalg}
Write $k$ for the lower integer part of $n/4$. 
\begin{itemize}
\item [(i)] The maximal dimension of a commutative subalgebra of $E^{(n)}$ is 
$\dim(E^{(n)}_{\overline{0}})+|\mathcal{F}|$ where $\mathcal{F}\subseteq 2^{[n]}$ is an odd intersecting system of maximal possible size, so this dimension is 
\[ \begin{array}{ll}
 3\cdot 2^{n-2}  & \mbox{ when }n\mbox{ is even}; \\
 2^{n-1}+\sum_{l=k}^{2k}\binom{n}{2l+1} & \mbox{ when }n=4k+1;\\
 2^{n-1}+\binom{n-1}{2k} +\sum_{l=k}^{2k}\binom{n}{2l+3}& \mbox{ when }n=4k+3. 
\end{array}
\]
\item [(ii)] When  $n$ is even, all maximal commutative subalgebras of $E^{(n)}$ have the same dimension, but they are not all isomorphic if $n>2$. 
\item[(iii)] When $n=4k+1$,   there is a unique maximal dimensional commutative subalgebra in $E^{(n)}$, namely  
$E_{\overline{0}}\oplus\bigoplus_{n/2<i\textrm{ odd}}E_i$. 
\item[(iv)] When $n=4k+3$, the maximal dimensional commutative subalgebras of $E^{(n)}$ are exactly the subspaces of the form 
$E_{\overline{0}}\oplus C\oplus\bigoplus_{n/2<i\textrm{ odd}}E_i$ where $C\subset E^{(n)}_{2k+1}$ is a square zero subspace of dimension $\binom{n-1}{2k}$. 
\item [(v)] When $n$ is odd, there exist maximal commutative subalgebras of $E^{(n)}$ that are not maximal dimensional commutative subalgebras in $E^{(n)}$. 
\end{itemize}
\end{theorem}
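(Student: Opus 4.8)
The plan is to derive all five statements from the reduction in Theorem~\ref{thm:monomialsubalg} together with Proposition~\ref{prop:oddintersecting}. For (i): by Proposition~\ref{prop:strukt1} every commutative subalgebra is contained in a maximal one, and every maximal commutative subalgebra has the form $E_{\overline0}\oplus D$ with $D\subseteq E_{\overline1}$ a (maximal) square-zero subspace; hence the maximal dimension of a commutative subalgebra equals $\dim(E_{\overline0}^{(n)})+\mu=2^{n-1}+\mu$, where $\mu$ is the largest dimension of a square-zero subspace of $E_{\overline1}$. A square-zero subspace is a (not necessarily unitary) subalgebra, so by Theorem~\ref{thm:monomialsubalg} and Proposition~\ref{prop:gamma-grading} it can be replaced by an equidimensional monomial square-zero subspace $\mathrm{Span}_{\mathbb F}\{v_J\mid J\in\mathcal F\}\subseteq E_{\overline1}$; here every $|J|$ is odd, and the condition $v_Jv_K=0$ for all $J,K\in\mathcal F$ is precisely that $\mathcal F$ be an odd intersecting system. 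Conversely every odd intersecting system gives such a subspace, so $\mu$ is the maximal size of an odd intersecting system, and the three displayed numbers follow by inserting Proposition~\ref{prop:oddintersecting}(i), (iii), (iv) (the bound $2^{n-2}$ in (i) being attained by the odd subsets through a fixed element) and rewriting the binomial sums.

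For (iii) and (iv) I would take a commutative subalgebra $A=E_{\overline0}\oplus D$ of maximal dimension, so $D$ is a square-zero subspace of the maximal dimension $\mu$, and pass to the graded square-zero subspace $D^{\min}$ of the same dimension (Proposition~\ref{prop:a^min}). Applying $\gamma_1\cdots\gamma_n$ to $D^{\min}$ and invoking Proposition~\ref{prop:gamma-grading} produces a monomial square-zero subspace with the same Hilbert series, that is, an odd intersecting system $\mathcal F$ of maximal size with $|\mathcal F\cap\binom{[n]}{d}|=\dim(D^{\min}\cap E_d)$ for all $d$. The uniqueness parts of Proposition~\ref{prop:oddintersecting}(iii)/(iv) now pin down $\mathcal F$, hence all the numbers $\dim(D^{\min}\cap E_d)$: for $n=4k+1$ one gets $D^{\min}=\bigoplus_{n/2<i\textrm{ odd}}E_i$, and for $n=4k+3$ one gets $D^{\min}=C'\oplus\bigoplus_{n/2<i\textrm{ odd}}E_i$ with $C'\subseteq E_{2k+1}$ of dimension $\binom{n-1}{2k}$. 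The main obstacle is to transfer this from $D^{\min}$ to $D$ itself: since $x^{\min}\in D^{\min}$ for every nonzero $x\in D$, the fact that $D^{\min}$ is supported in the degrees $\{2k+1\}\cup\{i>n/2\textrm{ odd}\}$ (respectively in $\{i>n/2\textrm{ odd}\}$) forces $D$ to be supported in those degrees too, and for $n=4k+1$ a dimension count gives $D=\bigoplus_{n/2<i\textrm{ odd}}E_i$, proving (iii). For $n=4k+3$ one must additionally show $D\supseteq\bigoplus_{n/2<i\textrm{ odd}}E_i$: the image of $D$ under the projection onto $E_{2k+1}$ is again a square-zero subspace of $E_{2k+1}$, of dimension at most $\binom{n-1}{2k}$ by the Erd\H os--Ko--Rado bound (applied via $\gamma_1\cdots\gamma_n$ to that image, which stays inside $E_{2k+1}$), and counting dimensions then forces the kernel of that projection, $D\cap\bigoplus_{n/2<i\textrm{ odd}}E_i$, to be all of $\bigoplus_{n/2<i\textrm{ odd}}E_i$; hence $D=C\oplus\bigoplus_{n/2<i\textrm{ odd}}E_i$ with $C:=D\cap E_{2k+1}$ a square-zero subspace of dimension $\binom{n-1}{2k}$. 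Conversely, for any such $C$ the subspace $E_{\overline0}\oplus C\oplus\bigoplus_{n/2<i\textrm{ odd}}E_i$ is readily checked to be an $E_{\overline0}$-submodule with square zero, of the dimension from (i), hence a maximal commutative subalgebra; this gives the word ``exactly'' in (iv).

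For (ii) the common dimension $3\cdot2^{n-2}$ is Corollary~\ref{cor:even}; to see that the maximal commutative subalgebras are not all isomorphic I would compare the invariant $\dim(\mathfrak m_A/\mathfrak m_A^2)$, where $\mathfrak m_A:=\big(\bigoplus_{k\ge2\textrm{ even}}E_k\big)\oplus D$ is the maximal ideal of the local algebra $A=E_{\overline0}\oplus D$. A short computation gives $\mathfrak m_A^2=\big(\bigoplus_{k\ge4\textrm{ even}}E_k\big)\oplus E_2D$, so this invariant equals $\binom{n}{2}+\dim(D/E_2D)$; it equals $\binom{n}{2}+1$ when $D=E_{\overline0}v_1$, and is strictly larger for a maximal square-zero subspace living in the top degrees (for $4\mid n$ one may take $D=\bigoplus_{n/2<i\textrm{ odd}}E_i$, and there is an analogous construction when $n\equiv2\pmod4$), so for $n>2$ these two maximal commutative subalgebras are non-isomorphic.

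Finally, for (v) I would exhibit, for every odd $n$, the commutative subalgebra $E_{\overline0}\oplus E_{\overline0}v_1$. Here $D:=E_{\overline0}v_1=v_1E_{\overline0}$ is obviously an $E_{\overline0}$-submodule of $E_{\overline1}$, and the point is that it admits no proper square-zero extension inside $E_{\overline1}$: if $D'\supsetneq D$ were square-zero and $x\in D'\setminus D$, then $\pi_1(x)\ne0$, and choosing a smallest $K\subseteq\{2,\dots,n\}$ with $(\pi_1(x))_K\ne0$ together with some $j\in\{2,\dots,n\}\setminus K$ (this set is nonempty, $|K|$ being odd, hence at most $n-2$), one checks that $x\cdot\big(v_1v_{\{2,\dots,n\}\setminus(K\cup\{j\})}\big)\ne0$, contradicting $D'D'=\{0\}$ since $v_1v_{\{2,\dots,n\}\setminus(K\cup\{j\})}\in v_1E_{\overline0}\subseteq D'$. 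By Propositions~\ref{prop:strukt2} and~\ref{prop:strukt1}, $E_{\overline0}\oplus E_{\overline0}v_1$ is then a maximal commutative subalgebra, of dimension $2^{n-1}+2^{n-2}=3\cdot2^{n-2}$, which is strictly smaller than the number in (i) for every odd $n\ge5$ (a routine comparison of binomial sums; for $n=3$ one has equality, so the construction is informative only for $n\ge5$). The hard part throughout (v) is exactly this verification that $v_1E_{\overline0}$ is a maximal square-zero subspace; the rest is bookkeeping with the machinery already set up.
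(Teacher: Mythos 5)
Your proposal is correct, and its overall architecture coincides with the paper's: reduce via Proposition~\ref{prop:strukt1} to maximal square zero subspaces of $E_{\overline{1}}$, monomialize via Theorem~\ref{thm:monomialsubalg}, and invoke Proposition~\ref{prop:oddintersecting}. The one place where you genuinely diverge is the uniqueness argument in (iii): the paper picks a minimal $|J|$ with $x_J\neq 0$ for some $x\in D$ and a permutation $\sigma$ with $\sigma(\{1,\dots,|J|\})=J$, so that $J$ lands in the maximal odd intersecting system $\mathcal{F}_\sigma(D)$, contradicting Proposition~\ref{prop:oddintersecting}(iii); you instead pass to $D^{\min}$ and use the Hilbert-series preservation of $\gamma_1\cdots\gamma_n$ on graded subspaces to pin down the degree support of $D^{\min}$, hence of $D$ (since every $x\in D$ has $x^{\min}\in D^{\min}$ and the allowed support is upward closed among odd degrees). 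Your route is the one the paper itself uses for part of (iv), so applying it uniformly to (iii) and (iv) is arguably cleaner and avoids Remark~\ref{remark:other-gamma}; it buys a self-contained deduction of $D\subseteq\bigoplus_{i\geq 2k+1\text{ odd}}E_i$, and your projection-onto-$E_{2k+1}$ step makes explicit the dimension count that the paper compresses into ``It follows that $\dim(D\cap\bigoplus_{i>2k+1\textrm{ odd}})=\dim(\bigoplus_{i>2k+1\textrm{ odd}}E_i)$.'' Your treatment of (v) is also more detailed than the paper's (which asserts maximality of $\mathrm{Span}_{\mathbb{F}}\{v_J\mid 1\in J,\ |J|\text{ odd}\}$ as ``clear''), and you correctly flag that for $n=3$ the two dimensions coincide, so the claim in (v) is only informative (indeed only true) for odd $n\geq 5$ --- a caveat the paper's own proof glosses over. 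The remaining sketched points (the $n\equiv 2 \pmod 4$ case in (ii), and the identification $E_{\geq 2,\,\mathrm{even}}D=E_2D$ used in your radical computation) are routine and match what the paper does.
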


\begin{proof} 
(i) Note that a set of monomials $\{v_J\mid J\in\mathcal{F}\}$ spans a square zero subspace in $E_{\overline{1}}$ if and only if $\mathcal{F}\subseteq 2^{[n]}$ is an odd intersecting system. 
Now let $A$ be a maximal dimensional commutative subalgebra in $E$. Then in particular $A$ is a maximal commutative subalgebra of $E$, hence by Proposition~\ref{prop:strukt1} $A=E_{\overline{0}}\oplus D$ where $D\subseteq E_{\overline{1}}$ is a maximal dimensional square zero subspace. 
By Theorem~\ref{thm:monomialsubalg} $\gamma_1\dots\gamma_n(D)$ is a maximal dimensional square zero subspace of $E_{\overline{1}}$, hence 
$\mathcal{F}(D):=\{J\subseteq [n]\mid v_J\in \gamma_1\dots\gamma_n(D)\}$ is an odd intersecting system of maximal size. 
Since $|\mathcal{F}(D)|=\dim(D)$, statement (i) follows from Proposition~\ref{prop:oddintersecting}. 

(ii) It was shown in Corollary~\ref{cor:even} that for even $n$ the maximal commutative subalgebras all have the same dimension. 
Set $A:=E_{\overline{0}}+ \mathrm{Span}_{\mathbb{F}}\{v_J\mid 1\in J\}$ and 
$B:=E_{\overline{0}}\oplus (\oplus_{n/2<l\textrm{ odd }}E_l)\oplus C$ where $C=0$ when $n=4k$ and $C:=\mathrm{Span}_{\mathbb{F}}\{v_J\mid 1\in J,|J|=2k+1\}$ 
when $n=4k+2$. 
These algebras are local, their radical is their unique maximal ideal spanned by their homogeneous components of positive degree. We have 
$\mathrm{rad}(A)=\mathrm{rad}(A)^2\oplus E_2\oplus\mathbb{F}v_1$, showing $\dim(\mathrm{rad}(A)/\mathrm{rad}(A)^2)=\binom n2 +1$. 
On the other hand $\mathrm{rad}(B)=\mathrm{rad}(B)^2\oplus E_2\oplus E_{2k+1}$ when $n=4k$  whereas 
 $\mathrm{rad}(B)=\mathrm{rad}(B)^2\oplus E_2\oplus C\oplus \mathrm{Span}_{\mathbb{F}}\{v_J\mid |J|=2k+3,1\notin J\}$. 
 It follows that $\dim(\mathrm{rad}(B)/\mathrm{rad}(B)^2)=\binom n2+\binom{n}{2k+1}$ when $n=4k$ and  $\dim(\mathrm{rad}(B)/\mathrm{rad}(B)^2)=\binom n2+\binom{n-1}{2k}+\binom{n-1}{2k+3}$ when $n=4k+2$. This shows that $\dim(\mathrm{rad}(A)/\mathrm{rad}(A)^2)\ne \dim(\mathrm{rad}(B)/\mathrm{rad}(B)^2)$ for $n>2$, hence 
 $A\ncong B$ are nonisomorphic algebras. 

(iii) Assume $A\ne E_{\overline{0}}\oplus(\bigoplus_{n/2<i\textrm{ odd}}E_i)$ is another commutative subalgebra of maximal dimension. 
Since $A$ is maximal by Proposition~\ref{prop:strukt1} we get $A=E_{\overline{0}}\oplus D$, where $D\subseteq E_{\overline{1}}$ is a square zero subspace, which is not $\bigoplus_{n/2<i\text{ odd}}E_i$. So $D$ has an element  $x$ such that $x_J\ne 0$ for some $J\subseteq [n]$ with $|J|<n/2$. Suppose that $J$ was chosen here with $|J|$ minimal possible. 
Choose a permutation $\sigma\in S_n$ such that $\sigma(\{1,\dots,|J|\})=J$. Observe that $J\in \mathcal{F}_{\sigma}(D):=\{I\subseteq [n]\mid v_I\in 
\gamma_{\sigma(1)}\dots \gamma_{\sigma(n)}(D)\}$. 
As explained in (i) (and by Remark~\ref{remark:other-gamma}) 
$\mathcal{F}_{\sigma}(D)$ is a maximal size odd intersecting system. Thus $J\in \mathcal{F}_{\sigma}(D)$ contradicts Proposition~\ref{prop:oddintersecting} (iii). 

(iv) It is obvious that the subspaces in the statement are commutative subalgebras, and their dimension agrees with the value given in (i).  
A similar argument as in (iii) shows that by Proposition~\ref{prop:strukt1}, Theorem~\ref{thm:monomialsubalg}, Remark~\ref{remark:other-gamma}, and 
Proposition~\ref{prop:oddintersecting} a maximal dimensional commutative subalgebra $A$ is of the form $A=E_{\overline{0}}\oplus D$ where $D\subseteq E_{\overline{1}}$, $D^2=\{0\}$, and 
$D\subset \bigoplus_{i\ge 2k+1 \textrm{ odd}}E_i$. In addition, taking into account Propositions~\ref{prop:gamma-grading} and \ref{prop:a^min} we deduce $\dim(D^{\min}\cap E_{2k+1})=\binom{n-1}{2k}$. 
It follows that $\dim(D\cap \bigoplus_{i>2k+1\textrm{ odd}})=\dim(\bigoplus_{i>2k+1\textrm{ odd}} E_i)$. Consequently 
$D=C\oplus \bigoplus_{i>2k+1\textrm{ odd}} E_i$ where $C\subset E_{2k+1}$, and necessarily $C^2=\{0\}$, $\dim(C)=\binom{n-1}{2k}$.  

(v) Clearly $D:=\mathrm{Span}_{\mathrm{F}}\{v_J\mid 1\in J, |J| \textrm{ is odd}\}$ is a maximal square zero subspace in $E_{\overline{1}}$, hence 
$E_{\overline{0}}$ is a maximal commutative subalgebra. Its dimension is $3\cdot 2^{n-2}$, so when $n$ is odd this is strictly smaller than the maximal possible dimension of a commutative subalgebra of $E$, which is given in (i). 
\end{proof} 

\begin{conjecture}
If $n=4k+3$ and $A_1,A_2\subseteq E$ are maximal dimensional commutative subalgebras then $A_1\cong A_2$. 
\end{conjecture}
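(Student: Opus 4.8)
By Theorem~\ref{thm:maxcommsubalg}(iv), every maximal dimensional commutative subalgebra of $E=E^{(4k+3)}$ has the form $A_C=E_{\overline 0}\oplus C\oplus\bigoplus_{i>2k+1\textrm{ odd}}E_i$ where $C\subset E_{2k+1}$ is a square zero subspace of dimension $\binom{n-1}{2k}$. So the conjecture reduces to showing that the isomorphism type of $A_C$ does not depend on the choice of such a $C$. The natural strategy is to produce, for any two admissible square zero subspaces $C_1,C_2\subset E_{2k+1}$, an algebra automorphism of $E$ carrying $A_{C_1}$ onto $A_{C_2}$; since $E_{\overline 0}$ is the center and $\bigoplus_{i>2k+1\textrm{ odd}}E_i$ is an ideal of $E$ contained in every $A_C$, an automorphism of $E$ fixing these and sending $C_1$ to $C_2$ would do the job. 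The group $\mathrm{GL}_n(\mathbb F)$ acts on $E$ by algebra automorphisms via its linear action on $E_1=\mathrm{Span}\{v_1,\dots,v_n\}$, so the first task is to understand the orbits of the induced action on the relevant Grassmannian of square zero subspaces of $E_{2k+1}$.

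First I would identify which $(2k+1)$-dimensional\,-degree square zero subspaces $C$ of dimension $\binom{n-1}{2k}$ actually occur. A monomial model is $C_l=\mathrm{Span}\{v_J\mid l\in J,\ |J|=2k+1\}$ for a fixed $l\in[n]$ (this is the $\mathcal G$ appearing in Proposition~\ref{prop:oddintersecting}(iv)), and these are all $\mathrm{GL}_n$-conjugate to $C_n$. The key structural claim to isolate is: \emph{every} admissible $C$ is of the form $\{x\in E_{2k+1}\mid x\wedge \ell=0\text{ in }E_{2k+2}\}$ for some nonzero linear form, equivalently $C=v\cdot E_{2k}$ for some $0\ne v\in E_1$ — i.e. the square zero subspaces of the maximal dimension $\binom{n-1}{2k}$ inside $E_{2k+1}$ are exactly the "principal" ones. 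If this holds, then $\mathrm{GL}_n$ acts transitively on them (via transitivity on nonzero vectors of $E_1$), an automorphism of $E$ conjugating $C_1$ to $C_2$ exists, and since such an automorphism automatically preserves $E_{\overline 0}$ and each $E_i$ it carries $A_{C_1}$ isomorphically onto $A_{C_2}$, proving the conjecture.

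The proof of that structural claim is where the real work lies. One approach: given $C\subset E_{2k+1}$ with $C^2=0$ and $\dim C=\binom{n-1}{2k}$, pass to $C^{\min}$-type arguments are not available here since $C$ is already homogeneous, so instead argue directly. Multiplication gives a pairing $E_{2k+1}\times E_{2k+1}\to E_{4k+2}=E_{n-1}\cong\mathbb F^n$ (each component indexed by the missing coordinate), and $C^2=0$ says $C$ is totally isotropic for each of these $n$ skew forms simultaneously; comparing the dimension $\binom{n-1}{2k}$ against the isotropy constraints, and using that $\binom{[n]}{2k+1}$ restricted to sets through a fixed point has exactly this size, one wants to force $C$ to be contained in — hence equal to — such a "star". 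An alternative, possibly cleaner route is to invoke the classification already implicit in the proof of (iv): the odd intersecting family $\mathcal G$ realizing the extremal bound must, by the equality case of Erd\H os--Ko--Rado, be a star $\{X:\ l\in X\}$; one would then need to upgrade this from the monomial (combinatorial) setting to arbitrary square zero $C$, presumably by applying the $\gamma$-construction of Theorem~\ref{thm:monomialsubalg} along a well-chosen permutation and tracking how much of the linear structure of $C$ is determined.

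I expect the main obstacle to be exactly this last upgrade: the $\gamma$-operators and the passage to initial monomials are not $\mathrm{GL}_n$-equivariant, so knowing that $\gamma_{\sigma(1)}\cdots\gamma_{\sigma(n)}(C)$ is a monomial star for \emph{every} $\sigma$ does not immediately pin down $C$ itself as a principal subspace — one must extract enough simultaneous information from all the $\sigma$'s, or find an intrinsic (basis-free) characterization of principal square zero subspaces of maximal dimension in $E_{2k+1}$ and verify the dimension count forces it. A secondary subtlety is the field: the action of $\mathrm{GL}_n(\mathbb F)$ is transitive on nonzero vectors of $E_1$ over any field, so no characteristic or algebraic-closedness hypothesis should be needed beyond $\operatorname{char}\mathbb F\ne 2$, but one should double-check that the structural classification of $C$ does not secretly require $\mathbb F$ to be large.
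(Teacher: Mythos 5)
This statement is one of the paper's \emph{conjectures}: the authors give no proof of it, so there is nothing to compare your argument against, and your proposal should be judged on its own. Your reduction is the right one and is carried out correctly as far as it goes: by Theorem~\ref{thm:maxcommsubalg}(iv) everything hinges on the subspace $C\subset E_{2k+1}$, and if every admissible $C$ were of the form $vE_{2k}$ for some $0\ne v\in E_1$, then the $\mathrm{GL}(E_1)$-action (which acts by graded algebra automorphisms of $E$ and is transitive on nonzero vectors of $E_1$ over any field) would indeed carry $A_{C_1}$ onto $A_{C_2}$, proving the conjecture. You also correctly note that $\{x\in E_{2k+1}\mid xv=0\}=vE_{2k}$ has the right dimension $\binom{n-1}{2k}$ and is square zero.

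The gap is the structural claim itself, and it is not a technical loose end but the entire content of the problem. What the paper actually proves (via $\gamma_1\cdots\gamma_n$ and Erd\H os--Ko--Rado) is that the \emph{monomial} square zero subspaces of $E_{2k+1}$ of dimension $\binom{n-1}{2k}$ are exactly the stars; passing to initial monomials destroys the linear structure of $C$, so, as you yourself observe, knowing that $\gamma_{\sigma(1)}\cdots\gamma_{\sigma(n)}(C)$ is a star for every $\sigma$ does not pin down $C$. No argument is offered to close this, and the difficulty is real: the "isotropic for $n$ simultaneous skew forms" formulation does not yield uniqueness by a dimension count alone, since $E_{2k+1}$ contains maximal square zero subspaces that are \emph{not} principal (for instance $\Lambda^{2k+1}W\subset\Lambda^{2k+1}V$ for a subspace $W$ of dimension $4k+1$ is square zero and cannot be enlarged, yet is not of the form $vE_{2k}$), so any proof must use the extremal dimension $\binom{n-1}{2k}$ in an essential way — a linear-algebraic analogue of the uniqueness part of Erd\H os--Ko--Rado that is not established in the paper or in your proposal. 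Note also that even if the structural claim failed, the conjecture could still hold via an isomorphism not induced by an automorphism of $E$; your strategy would then say nothing. As it stands, the proposal is a sensible plan that isolates the open problem rather than a proof.
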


\begin{conjecture}\label{conj:2} 
If $n=4k+1$ and $A$ is a maximal commutative subalgebra of $E^{(n)}$ then dim$(A)\geq 3\cdot 2^{n-2}$.
\end{conjecture}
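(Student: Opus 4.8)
The plan is to show that every maximal commutative subalgebra $A\subseteq E^{(n)}$ with $n=4k+1$ satisfies $\dim(A)\ge 3\cdot2^{n-2}$. By Proposition~\ref{prop:strukt1} we may write $A=E_{\overline 0}\oplus D$ where $D\subseteq E_{\overline 1}$ is a maximal square zero subspace, so $\dim(A)=2^{n-1}+\dim(D)$ and it suffices to prove $\dim(D)\ge 2^{n-2}$; equivalently, $\dim(D)\ge\dim(E_{\overline 1})/2$, i.e. $D$ is at least half-dimensional in $E_{\overline 1}$. The natural first step is to invoke Theorem~\ref{thm:monomialsubalg} (or Proposition~\ref{prop:insubalg}): applying $\gamma_1\cdots\gamma_n$ to $D$ produces an equidimensional square zero subspace $A':=\gamma_1\cdots\gamma_n(D)$ spanned by monomials $\{v_J\mid J\in\mathcal F\}$, where $\mathcal F$ is an odd intersecting system with $|\mathcal F|=\dim(D)$. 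So the conjecture reduces to: \emph{every maximal (with respect to inclusion) odd intersecting system $\mathcal F\subseteq 2^{[n]}$ with $n=4k+1$ has $|\mathcal F|\ge 2^{n-2}$.} The subtlety is that "maximal" must be interpreted correctly — $\mathcal F$ need not be maximal as an intersecting system, only that $D$ is maximal as a square zero subspace; one must check that $\mathcal F=\mathcal F(D)$ inherits a maximality property strong enough to force the bound (and in fact an inclusion-maximal odd intersecting system is the relevant notion, since adding any odd set $J$ to $\mathcal F$ with $J\cap K\ne\emptyset$ for all $K\in\mathcal F$ would, via Proposition~\ref{prop:strukt1} and the monomial structure, contradict maximality of $D$).

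Next I would attack the combinatorial statement directly. Consider the involution on $2^{[n]}$ given by complementation, $X\mapsto[n]\setminus X$. Since $n$ is odd, $X$ and $[n]\setminus X$ have opposite parity, so this involution does \emph{not} directly pair odd sets with odd sets. Instead pair each odd $X$ with the even set $[n]\setminus X$, and observe that the odd sets of $[n]$ number $2^{n-1}$, matching $\dim(E_{\overline 1})$. The key observation to exploit is: if $\mathcal F$ is an inclusion-maximal odd intersecting system, then for every odd set $X\notin\mathcal F$ there exists $K\in\mathcal F$ with $X\cap K=\emptyset$ (otherwise $\mathcal F\cup\{X\}$ would still be odd intersecting). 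I would try to use this to build an injection from the odd sets \emph{not} in $\mathcal F$ into $\mathcal F$ itself, which would give $2^{n-1}-|\mathcal F|\le|\mathcal F|$, i.e. $|\mathcal F|\ge 2^{n-2}$. A promising way to define such an injection is via a \emph{symmetric chain decomposition} of the Boolean lattice $2^{[n]}$: each symmetric chain contains exactly one odd set of each size $i$ and $n-i$ paired up, and within each chain of length $\ge 2$ one can hope to match the discarded odd sets to retained ones.

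A cleaner route may be to argue at the level of the symplectic-type structure. For $n=4k+1$ odd, $\Phi$ maps into $E_{n-1}$, which is not one-dimensional, so $E_{\overline 1}$ is not symplectic; but one can still stratify by degree. The maximality condition $D=D^\perp$ from Proposition~\ref{prop:strukt2} does not immediately give half-dimensionality since $\Phi$ is degenerate, yet the radical of $\Phi$ is precisely $\mathrm{Span}\{v_J : |J| = n\} = 0$ together with pieces in low degree — here I would compute $\ker(\Phi)$ explicitly and show it is small, then deduce that $D\supseteq D^\perp$ forces $\dim(D)\ge(\dim(E_{\overline 1})-\dim\ker\Phi)/2$, and finally check this is still $\ge 2^{n-2}$ when $n=4k+1$. \textbf{The main obstacle} I expect is the combinatorial core: proving that every inclusion-maximal odd intersecting system in $2^{[n]}$ has size at least $2^{n-2}$ when $n\equiv 1\pmod 4$. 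This is a statement about \emph{all} maximal (not maximum) odd intersecting systems, so Erd\H os–Ko–Rado and the extremal analysis of Proposition~\ref{prop:oddintersecting} do not apply; one genuinely needs a lower bound valid for every saturated such family, and the symmetric-chain matching argument, while plausible, will require care to handle chains of odd length and to ensure the matching respects the intersecting constraint. I suspect this is exactly why the authors leave it as a conjecture rather than a theorem.
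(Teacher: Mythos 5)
This statement is a \emph{conjecture} in the paper: the authors give no proof, and only remark that it holds for $n=5$ (via Proposition~\ref{prop:contains-linear}, together with the observation that a maximal square zero subspace of $E^{(5)}_{\overline 1}$ containing no element with nonzero degree-one component must equal $E_3\oplus E_5$, which has dimension $11>2^{3}$). Your proposal is likewise not a proof --- you say so yourself at the end --- so the honest verdict is that the core is missing. Beyond the admitted missing combinatorial lemma (that every inclusion-maximal odd intersecting system in $2^{[4k+1]}$ has at least $2^{n-2}$ members, which is itself unproven and nontrivial), there are two concrete defects in the reduction you sketch.

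First, the step from ``$D$ is a maximal square zero subspace'' to ``$\mathcal F(D)$ is an inclusion-maximal odd intersecting system'' is not justified, and your parenthetical argument for it does not work. The maps $\gamma_i$ preserve dimension and the square zero property, and they preserve inclusions only in the forward direction \eqref{eq:gamma-subspace}; nothing allows you to pull an extension of $\gamma_1\cdots\gamma_n(D)$ back to an extension of $D$. Concretely, by Proposition~\ref{prop:gamma-initial} the monomials $v_J$ with $J\in\mathcal F(D)$ are the initial monomials of elements of $D$. If some odd $J_0$ meets every $J\in\mathcal F(D)$, that only says $v_{J_0}\cdot\mathrm{inm}_>(x)=0$ for all $x\in D$; it does not give $v_{J_0}x=0$, since lower-order terms of $x$ can contribute. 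So $D+\mathbb{F}v_{J_0}$ need not be square zero, and the maximality of $D$ is not contradicted. Hence even a complete proof of your combinatorial claim would not yield the conjecture without a further argument. Second, the alternative route through $\Phi$ also breaks down: for odd $n$ the pairing $\Phi$ takes values in $E_{n-1}$, a space of dimension $n$, so $D=D^\perp$ from Proposition~\ref{prop:strukt2} means $D$ is a simultaneous perpendicular with respect to $n$ skew forms; this only gives $\mathrm{codim}(D^\perp)\le n\dim(D)$, hence $\dim(D)\ge 2^{n-1}/(n+1)$, far below $2^{n-2}$. (Your description of the radical is also off: for odd $n$ the element $v_{[n]}$ lies in $E_{\overline 1}$ and spans the radical of $\Phi$.) Whether the half-dimension bound can actually fail for $n\equiv 1\pmod 4$ is exactly the content of the conjecture, and no soft linear-algebra argument of this type decides it.
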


\begin{conjecture}\label{conj:3}
If $n=4k+3$ and $A$ is a maximal commutative subalgebra of $E^{(n)}$ then dim$(A)\geq 2^{n-1}+\sum_{l=k}^{2k}\binom{n}{2l+3}+|\mathcal{F}|$, such that $\mathcal{F}\subseteq \binom{[n]}{\lfloor n/2\rfloor}$ is a maximal  intersecting system of minimal possible size.
\end{conjecture}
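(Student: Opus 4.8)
The plan is to imitate the proofs of Corollary~\ref{cor:even} and Theorem~\ref{thm:maxcommsubalg}: by Proposition~\ref{prop:strukt1} one writes $A=E_{\overline 0}\oplus D$ with $D\subseteq E_{\overline 1}$ a maximal square zero subspace, and since $\dim(E_{\overline 0}^{(n)})=2^{n-1}$ the assertion becomes the lower bound $\dim(D)\ge\sum_{l=k}^{2k}\binom{n}{2l+3}+m$, where $m=|\mathcal F|$ is the smallest cardinality of a maximal intersecting system $\mathcal F\subseteq\binom{[n]}{2k+1}$ (note $\lfloor n/2\rfloor=2k+1$). The first move is to pass to monomials. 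If $\{v_J\mid J\in\mathcal G\}$ spans a square zero $E_{\overline 0}$-submodule of $E_{\overline 1}$ then $\mathcal G$ is an odd intersecting system, and since $x^2=0$ for every $x\in E_{\overline 1}$, such a submodule is \emph{maximal} exactly when $\mathcal G$ is a maximal odd intersecting system. One would then apply $D\mapsto\gamma_1\cdots\gamma_n(D)$ (Theorem~\ref{thm:monomialsubalg}; equivalently $D\mapsto\mathrm{in}_>(D)$, Proposition~\ref{prop:gamma-initial}) to reduce to the combinatorial claim that \emph{every maximal odd intersecting system $\mathcal G$ on $[4k+3]$ has $|\mathcal G|\ge\sum_{l=k}^{2k}\binom{n}{2l+3}+m$}, the equality cases being $\mathcal G_0=\mathcal F\cup\{J\subseteq[n]\mid |J|\text{ odd},\ |J|>n/2\}$ with $\mathcal F\subseteq\binom{[n]}{2k+1}$ a smallest maximal intersecting system; each such $\mathcal G_0$ is indeed a maximal odd intersecting system, since a $(2k+1)$-set meets every odd set of size $\ge2k+3$, while every set of size $\le2k-1$ is disjoint from some $(2k+3)$-set.

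For this reduction to be legitimate one needs a lemma not established in the paper: that $\gamma_i$ (equivalently $\mathrm{in}_>$) sends \emph{maximal} square zero subspaces to maximal ones. Using \eqref{eq:pi(x)y} one checks that $\gamma_i(D)$ is again a square zero $E_{\overline 0}$-submodule of the same dimension, but the semicontinuity of initial subspaces (Proposition~\ref{prop:insubalg}) only gives the unwanted implication ``$\gamma_i(D)$ maximal $\Rightarrow D$ maximal''. To obtain the missing direction one would try to show that $D=D^\perp$ (Proposition~\ref{prop:strukt2}) forces $\gamma_i(D)=\gamma_i(D)^\perp$, by unwinding the identity $\Phi_\ell(a,b)=\omega_\ell(\pi_\ell(a),\pi_\ell(b))$, where $\Phi_\ell(a,b)$ is the coefficient of $v_{[n]\setminus\ell}$ in $ab$ and $\omega_\ell$ is the symplectic form of the subalgebra generated by $\{v_j\mid j\ne\ell\}$. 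Should that resist, a fallback is to argue directly with $D=\bigcap_{\ell=1}^n\pi_\ell^{-1}\!\big(\pi_\ell(D)^{\perp_{\omega_\ell}}\big)$; here a naive codimension count only yields $\dim(D)\ge 2^{n-1}-\sum_\ell\dim\pi_\ell(D)$, which is far too weak, so one must exploit the rigid interdependence of the isotropic subspaces $\pi_\ell(D)$.

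The real obstacle, and the reason the statement is only conjectural, is the combinatorial core. A useful first observation is that a maximal odd intersecting system $\mathcal G$ is an \emph{up-set} among odd subsets of $[n]$: if $F\in\mathcal G$, $F\subseteq F'$ and $|F'|$ is odd, then $F'$ meets everything $F$ meets, so $F'\in\mathcal G$ by maximality. If the smallest member of $\mathcal G$ has size $\ge2k+1$, then every odd set of size $\ge2k+3$ meets every member of $\mathcal G$, hence lies in $\mathcal G$; the $(2k+1)$-sets in $\mathcal G$ then form a maximal intersecting subsystem of $\binom{[n]}{2k+1}$, giving $|\mathcal G|\ge\sum_{l=k}^{2k}\binom{n}{2l+3}+m$ with the equality case described above. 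The hard case is when $\mathcal G$ contains a set $F$ of size $s\le2k-1$: then $\mathcal G$ contains all $2^{n-s-1}$ odd supersets of $F$, which suffices when $s=1$ (where $\mathcal G$ is forced to be the full star, of size $2^{n-2}\ge\sum_{l=k}^{2k}\binom{n}{2l+3}+m$), but for $s\ge3$ this is not enough and one must combine the up-sets generated by the several minimal members of $\mathcal G$ that maximality forces to coexist with $F$. I expect this exchange step --- proving that carrying low-degree sets is never cheaper than the extremal $\mathcal G_0$ --- to be where the argument stalls; moreover the target quantity $m=m(n,2k+1)$ is itself the minimum size of a maximal uniform intersecting family, for which no closed formula is known, so the conjecture inherits the difficulty of that classical open problem.
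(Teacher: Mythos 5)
The statement you were asked to prove is Conjecture~\ref{conj:3} of the paper; it is stated there without proof (the authors only remark that the case $n=7$ follows from Proposition~\ref{prop:contains-linear}), so there is no argument of record to compare yours against, and you were right not to manufacture one. Your reduction is the natural one, and your diagnosis of the two obstructions is accurate. First, the paper's monomialization ($\gamma_1\cdots\gamma_n$, equivalently $\mathrm{in}_>$) preserves the property of being a square zero $E_{\overline{0}}$-submodule and preserves dimension, but nothing in the paper shows that it preserves \emph{maximality} of such a subspace; as you observe, Proposition~\ref{prop:insubalg} only yields the converse implication, which is the wrong direction for the reduction. Second, even granting that reduction, one is left with the open combinatorial problem of bounding from below the size of an arbitrary \emph{maximal} odd intersecting system on $[4k+3]$, and the target involves the quantity $m$, the minimum size of a maximal intersecting family in $\binom{[n]}{2k+1}$, for which no closed formula is known. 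These are precisely the reasons the statement is a conjecture rather than a theorem.

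Two refinements to your sketch. The assertion that the monomial submodule spanned by $\{v_J\mid J\in\mathcal{G}\}$ is maximal \emph{exactly} when $\mathcal{G}$ is a maximal odd intersecting system conflates maximality among monomial square zero subspaces with maximality among all of them; the direction you need (maximal system $\Rightarrow$ maximal subspace) requires ruling out non-monomial extensions, e.g.\ via the criterion $D=D^{\perp}$ of Proposition~\ref{prop:strukt2}. More importantly, your claim that a minimal member of size $s=1$ settles that case is itself not free: the star has $2^{n-2}$ odd members, and a short computation gives $2^{n-2}-\sum_{l=k}^{2k}\binom{n}{2l+3}=\tfrac{1}{2}\binom{n-1}{2k+1}=\tfrac{k+1}{2k+1}\binom{n-1}{2k}$, so the $s=1$ case already requires $m\le\tfrac{k+1}{2k+1}\binom{n-1}{2k}$, which is strictly stronger than the trivial bound $m\le\binom{n-1}{2k}$ coming from the star in $\binom{[n]}{2k+1}$ (it does hold for $n=7$, where the Fano plane gives $m=7\le 10$, but it is not obvious in general). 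So the gap is not confined to the exchange step for $3\le s\le 2k-1$; it is already present at $s=1$. As an honest assessment of an open problem, though, your write-up is essentially correct and correctly located where the difficulty lies.
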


Conjecture~\ref{conj:2} holds for $n=5$ and Conjecture~\ref{conj:3} holds for $n=7$ by Proposition~\ref{prop:contains-linear} below. 
We finish this section with a result that classifies maximal commutative subalgebras in $E$ of a special form: 

\begin{proposition}\label{prop:contains-linear}
The maximal commutative subalgebras of $E$ that contain an element whose degree $1$ component is non-zero are exactly the subalgebras of the form $\alpha(A)$, where 
$\alpha$ is an $\mathbb{F}$-algebra  automorphism of $E$, and $A=E_{\overline{0}}+\mathrm{Span}_{\mathbb{F}}\{v_J\mid 1\in J\subseteq [n]\}$ (in particular, these subalgebras have dimension $3\cdot 2^{n-2}$). 
\end{proposition}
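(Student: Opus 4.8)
The plan is to prove both inclusions. For the easy direction, suppose $\alpha$ is an $\mathbb{F}$-algebra automorphism of $E$ and $A = E_{\overline 0} + \mathrm{Span}_{\mathbb F}\{v_J \mid 1 \in J\}$. One first checks $A$ is a maximal commutative subalgebra: it is commutative because $\{v_J \mid 1\in J\}$ spans a square zero subspace (the corresponding family $\{J \mid 1 \in J\}$ is an odd-or-even intersecting system, and after intersecting with $E_{\overline 1}$ it is odd intersecting — in any case $v_Jv_K = 0$ whenever $1 \in J \cap K$), and it is a maximal square zero subspace because $\mathcal{F} = \{J \mid 1\in J,\ |J|\text{ odd}\}$ satisfies $\mathcal F = \mathcal F^\perp$ in the sense relevant to Proposition~\ref{prop:strukt2} (equivalently, any odd $K$ with $1\notin K$ has an odd partner $K' \ni 1$ with $K \cap K' = \emptyset$, namely using a complement-type construction). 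Then $\alpha(A)$ is a maximal commutative subalgebra since $\alpha$ is an algebra automorphism. It contains $\alpha(v_1)$, whose degree $1$ component: here I need to recall that every automorphism of $E$ sends $v_1$ to an element whose degree $1$ part is nonzero (automorphisms of $E$ preserve the radical $\bigoplus_{k\ge 1}E_k$ and its square $\bigoplus_{k\ge 2}E_k$, hence induce an isomorphism on $E_1 \cong \mathrm{rad}/\mathrm{rad}^2$), so $\alpha(v_1)$ has nonzero degree $1$ component. The dimension is $3\cdot 2^{n-2}$ by Corollary~\ref{cor:even} when $n$ is even and directly by counting $\{J \mid 1 \in J\}$ in general: $\dim A = \dim E_{\overline 0} + 2^{n-2} = 3\cdot 2^{n-2}$ since exactly $2^{n-2}$ of the odd subsets contain $1$.

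For the harder direction, let $B$ be a maximal commutative subalgebra containing an element $b$ with $b_{\{i\}} \ne 0$ for some $i$; after permuting generators assume the degree $1$ component of $b$ is $\sum_j \lambda_j v_j$ with some $\lambda_j \ne 0$. The idea is to produce an automorphism $\alpha$ of $E$ carrying $A$ onto $B$. First, I would replace $v_1$ by $w_1 := b^{\min}$-type element: more precisely, use $b$ to build a generator. Write $b = \ell + h$ where $\ell = \sum \lambda_j v_j \in E_1$ is nonzero and $h \in \bigoplus_{k\ge 2} E_k$. Since $E_{\overline 0} \subseteq B$ and $B$ is a subalgebra, one can hope to "straighten" $b$: there should be an automorphism $\alpha_0$ of $E$ fixing $E_{\overline 0}$-structure suitably with $\alpha_0(v_1) \in B$ and $\alpha_0$ a linear automorphism on $E_1$ extended multiplicatively — but $b$ itself is not homogeneous, so instead I would argue that $B \cap E_{\overline 1}$, being an $E_{\overline 0}$-submodule containing $b$, contains $E_{\overline 0}\, b \ni$ all of $E_{\overline 0}\ell + E_{\overline 0}h$, and in particular contains elements allowing us to extract $\ell$ up to the module action — concretely $v_{[n]\setminus\{j\}} \cdot b$ or similar products land in $E_{\overline 0}\cap(\text{top degrees})$ and force constraints. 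The cleanest route: show $D := B \cap E_{\overline 1}$ is a maximal square zero subspace (Proposition~\ref{prop:strukt1}(ii)) of dimension $\le 2^{n-2}$, hence by Proposition~\ref{prop:strukt2} it is "Lagrangian-like"; then use that $D$ contains an element of degree-$1$-support to pin $\dim D = 2^{n-2}$ and to produce a linear change of the generators $v_i$ (i.e. an automorphism $\alpha$ of $E$ induced by a linear automorphism of $E_1$, composed with a unipotent automorphism $v_i \mapsto v_i + (\text{higher degree})$) such that $\alpha^{-1}(B) = A$.

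The main obstacle — and the step I would spend the most care on — is precisely this last construction of $\alpha$: upgrading the mere existence of one element of $B$ with nonzero linear part into a full change of generators conjugating $B$ to the model $A$. This has two sub-difficulties. First, automorphisms of $E^{(n)}$ are not just linear substitutions $v_i \mapsto \sum c_{ij}v_j$; the general automorphism is of the form $v_i \mapsto (\text{linear part}) + (\text{higher odd-degree part})$, and one must check that such "unipotent" automorphisms (together with $GL(E_1)$) suffice to move $b$ to $v_1$ and simultaneously move $D$ to $\mathrm{Span}\{v_J \mid 1\in J,\ |J|\text{ odd}\}$. Second, one must verify that once $v_1 \in B$ (after applying $\alpha$), maximality of $B$ together with $v_1^2 = 0$ and $v_1 \cdot D = 0$ forces $D = \mathrm{Span}\{v_J \mid 1 \in J,\ |J|\text{ odd}\}$ exactly: the point is $v_1 D = 0$ means every monomial appearing in any element of $D$ already contains $v_1$ (by $\gamma$-type reasoning or Proposition~\ref{prop:strukt2} applied after noting $D \subseteq \ker(\pi_1) = v_1 E$), so $D \subseteq v_1 E_{\overline 0}$, and then $\dim D = 2^{n-2} = \dim(v_1 E_{\overline 0} \cap E_{\overline 1})$ forces equality. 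I expect the $\gamma_1$ operator from Section~\ref{sec:projectionsongrassmann} and Proposition~\ref{prop:gammaproperties} to do real work here: $\gamma_1(D)$ is a square zero subspace of the same dimension, and if $v_1$ (or $\alpha(v_1)$) normalizes $D$ appropriately then $\gamma_1(D) \subseteq v_1 E$ directly, giving $D = v_1 E_{\overline 0}$ after accounting for dimensions.
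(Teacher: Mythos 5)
Your proposal follows essentially the same route as the paper: normalize the element with nonzero linear part to $v_1$ by composing a linear automorphism of $E_1$ with the substitution $v_1+c\mapsto v_1$, $v_i\mapsto v_i$ for $i\ge 2$ (which does extend to an automorphism of $E$, as the paper notes with a reference to Bavula), and then observe that $v_1\in D:=B\cap E_{\overline{1}}$ together with $D^2=0$ forces $D\subseteq v_1E\cap E_{\overline{1}}$, whence equality by maximality of $D$. Your worry about simultaneously moving $D$ onto the model subspace is unnecessary for exactly the reason you give in your second sub-point, and your intermediate claim that a maximal square zero subspace has dimension at most $2^{n-2}$ is false for odd $n$ in general, but harmless here since the containment $D\subseteq v_1E\cap E_{\overline{1}}$ already pins everything down without it.
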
 

\begin{proof} Let $B$ be a maximal commutative subalgebra of $E$ containing an element $a$ whose degree $1$ component is non-zero. 
Then by Proposition~\ref{prop:strukt1} $A$ contains an element $b$ with $b\in E_{\overline{1}}$ and the degree $1$ component $b_1$ of $b$ is non-zero. 
A linear automorphism of $V$ sending $b_1$ to $v_1$ extends to an $\mathbb{F}$-algebra automorphism $\beta$ of $E$, and $\beta(B)$ contains 
$\beta(b)=v_1+c$ with $c\in E_3+E_5+E_7+\cdots$. It is also known (and easy to see) that the map $v_1+c\mapsto v_1$, $v_2\mapsto v_2$, $\dots$, $v_n\mapsto v_n$ extends to an $\mathbb{F}$-algebra automorphism $\rho$ of $E$ (see \cite{bavula} for more details on the automorphism group of $E$). Then $\rho\beta(B)$ is a maximal commutative subalgebra of $E$ containing $v_1$. If $D$ is square zero subspace of $E_{\overline{1}}$ containing $v_1$, then necessarily $D\subseteq v_1E\cap E_{\overline{1}}$. It follows by Proposition~\ref{prop:strukt1} that $\rho\beta(B)=A$, where $A$ is the subalgebra of $E$ in the statement.  
\end{proof} 


\section{Examples}

\begin{example}\label{ex:1} {\rm 
For $n=2$ and the subspace $W=\mathbb{F}(v_1+v_2)$ we have 
\[\gamma_1(\gamma_2(W))= \mathbb{F}v_1\ne\mathbb{F}v_2= \gamma_2(\gamma_1(W)).\]   }
\end{example}

\begin{example}\label{ex:2} {\rm 
Consider the  $6$-dimensional ideal $A:=\mathbb{F}(v_{\{1,2\}}+v_{\{3.4\}})\oplus E_3\oplus E_4$  of $E^{(4)}$. Then $\gamma_1(A)=\mathbb{F}v_{\{3,4\}}\oplus E_3\oplus E_4$. Since $A^2=\mathbb{F}v_{\{1,2,3,4\}}$ and $\gamma_1(A)^2=0$ we get $A\ncong \gamma_1(A)$ and $\gamma_1(A)^2\subsetneq\gamma_1(A^2)$. }
\end{example}

\begin{example}\label{ex:3}{\rm 
There is a square zero subspace $D:=\mathrm{Span}_{\mathbb{F}}\{v_{\{1,2,3\}}+v_{\{4,5,6\}},v_{\{1,2,4\}}+v_{\{3,5,6\}}\}$ of $E^{(6)}$ and permutations $\sigma,\rho\in S_6$ such that 
$\mathcal{F}_{\sigma}(D)$ and $\mathcal{F}_{\rho}(D)$ (see the proof of Theorem~\ref{thm:maxcommsubalg} (iii) for this notation) 
are not isomorphic as intersecting systems. Indeed, for 
$\sigma=\mathrm{id}$, $\rho=(36)$ (transposition) we have 
\[\mathcal{F}_{\sigma}(D)=\{\{1,2,3\},\{1,2,4\}\} \mbox{ and } 
\mathcal{F}_{\rho}(D)=\{\{4,5,6\},\{1,2,4\}\}.\] 
The two sets in $\mathcal{F}_{\sigma}(D)$ have two common elements, whereas the two sets in $\mathcal{F}_{\rho}(D)$ have one common element. }
\end{example}

\begin{example}\label{ex:4}{\rm 
For the $n$-dimensional subspace 
$D:=\mathrm{Span}_{\mathbb{F}}\{\sum_{I\subseteq \binom{[n]}{k}} v_I \mid k\in [n]\}\subset E^{(n)}$ and a permutation $\sigma\in S_n$ 
 we have 
 \[\gamma_{\sigma(1)}\gamma_{\sigma(2)}\dots\gamma_{\sigma(n)}(D)=\mathrm{Span}_{\mathbb{F}}\{ v_{\{\sigma(1)\}},v_{\{\sigma(1),\sigma(2)\}},v_{\{\sigma(1),\sigma(2),\sigma(3)\}},\dots, v_{[n]}\}.\]
So if $\sigma\ne\rho\in S_n$ then
$\gamma_{\sigma(1)}\gamma_{\sigma(2)}\dots\gamma_{\sigma(n)}(D)\ne\gamma_{\rho(1)}\gamma_{\rho(2)}\dots\gamma_{\rho(n)}(D).$}
\end{example}

\begin{example}\label{ex:6} {\rm 
Consider the $4$-dimensional subalgebra  $D:=\mathrm{Span}_{\mathbb{F}}\{v_{\{1,2\}}+v_3,v_1,v_{\{1,3\}},v_{\{1,2,3\}}\}$  of $E^{(3)}$. 
Then $D$, $\gamma_1\gamma_2\gamma_3(D)$ and $\gamma_3\gamma_2\gamma_1(D)$ are pairwise non-isomorphic  subalgebras, since the dimensions of their squares are $2$, $0$, and $1$. }
\end{example}

\begin{example}\label{example:nonmonomial} {\rm For $x=\sum_{J\in\binom{[n]}{2}}x_Jv_J\in E^{(n)}_2$ we have $x^2=0$ if and only if 
\[x_{\{i,j\}}x_{\{k,l\}}-x_{\{i,k\}}x_{\{j,l\}}+x_{\{i,l\}}x_{\{j,k\}}=0 \mbox{ holds for all }\{i,j,k,l\}\in\binom{[n]}{4}.\] 
These are the well-known {\it Grassmann-Pl\"ucker relations} (see for example \cite{sturmfels}), so $x^2=0$ if and only if $x\in E^{(n)}_2\cong \bigwedge^2V$ is {\it decomposable}, i.e. $x\in D:=\{yz\mid y,z\in E^{(n)}_1\}$. 
The subset $D$ is Zariski closed in the $\binom{n}{2}$-dimensional affine space $E^{(n)}_2$, and the dimension of this affine algebraic variety is $2n-3$. 
It follows that when $n\ge 4$, for a general linear subspace $L\subset E^{(n)}_2$ with $\dim(L)\le \binom{n}{2}-2n+3$ we have $L\cap D=\{0\}$. 
Now let $A$ be the subalgebra of $E$ generated by such a non-zero $L$. Then $A\setminus A^2=(L\setminus\{0\})+A^2$ contains no non-zero element $a$ with $a^2=0$, so 
$A$ can not be isomorphic to a subalgebra $B$ of $E$ generated by monomials, since a minimal set of monomials generating $B$ consists of square zero elements in $B\setminus B^2$. For example, in $E^{(4)}$ take $L:=\mathbb{F}(v_{\{1,2\}}+v_{\{3,4\}})$; the subalgebra generated by $L$ is  $\mathrm{Span}_{\mathbb{F}}\{v_{\{1,2\}}+v_{\{3,4\}},v_{\{1,2,3,4\}}\}$. The square zero elements span a proper subspace in it, so it is not isomorphic to a subalgebra of $E^{(n)}$ generated by monomials. }
\end{example}  

\begin{example}\label{example:nongraded} {\rm Set $A:=\mathrm{Span}_{\mathbb{F}}\{v_1+v_{\{2,3\}},v_{\{1,2,3\}}\}$. We have $(v_1+v_{\{2,3\}})^2=2v_{\{1,2,3\}}$, 
$v_{\{1,2,3\}}^2=0$, $(v_1+v_{\{2,3\}})v_{\{1,2,3\}}=0$. So $A$ is a $2$-dimensional nilpotent subalgebra of $E^{(3)}$, containing an element whose square is not zero. 
It is easy to see that a $2$-dimensional nilpotent graded subalgebra of  $E^{(3)}$ must be a square zero subspace, so $A$ is not isomorphic to a graded subalgebra of $E^{(3)}$. On the other hand, $A$ is isomorphic as an $\mathbb{F}$-algebra to the subalgebra $\mathrm{Span}_{\mathbb{F}}\{v_{\{1,2\}}+v_{\{3,4\}},v_{\{1,2,3,4\}}\}$ of $E^{(4)}$. }
\end{example} 


\begin{center} Acknowledgement \end{center} 

We are grateful to J. Szigeti for an inspiring lecture and some discussions.


\end{document}